\documentclass[12pt]{amsart}
\pagestyle{plain} 
\usepackage[utf8]{inputenc}
\usepackage{setspace}
\usepackage[english]{babel}
\usepackage{mathabx}
\usepackage{framed}
\usepackage{soul}
\usepackage[all]{xy}
\usepackage[margin=1in]{geometry}
\usepackage{arydshln}
\usepackage[colorinlistoftodos]{todonotes}
\usepackage{pb-diagram}
\usepackage[mathscr]{euscript}
\usepackage{multicol}
\usepackage{stmaryrd} %\mapsfrom
\usepackage{empheq} %for labeling equations individually in cases
\usepackage{tikz-cd}
\tikzset{
  symbol/.style={
    draw=none,
    every to/.append style={
      edge node={node [sloped, allow upside down, auto=false]{$#1$}}}
  }
}

\usepackage{amsmath,amsfonts,amssymb,amsthm,mathrsfs,latexsym,mathtools}
\usepackage{commath}
\linespread{1}
\usepackage[T1]{fontenc}
\usepackage{hyperref}
\usepackage{caption} %figure
\usepackage{subcaption} %figure

\usepackage{lipsum}

\DeclareMathAlphabet{\mathbbmsl}{U}{bbm}{m}{sl}

\usetikzlibrary{matrix}
\title{Elliptic Surfaces}

\newcommand{\C}{\mathbb{C}} %newly added on Aug 30, 2021 
\newcommand{\Z}{\mathbb{Z}}

\newcommand{\bbQ}{\mathbb{Q}}
 %Mar 29, 2022

\newtheorem{theorem}{Theorem}[section]
\newtheorem{definition}[theorem]{Definition}
\newtheorem{proposition}[theorem]{Proposition}
\newtheorem{corollary}[theorem]{Corollary}

\newtheorem{lemma}[theorem]{Lemma}
\newtheorem{conjecture}[theorem]{Conjecture}

\newtheorem{claim}[theorem]{Claim}

\newtheorem{thm}{Theorem}

\title{Monodromy of Primitive Vanishing Cycles for Hypersurfaces in $\mathbb P^4$}

\begin{document}
\author{Yilong Zhang}
\date{Jan 13, 2024}
\maketitle
\begin{abstract}
    Let $X$ be a smooth projective variety. Schnell showed that the middle-dimensional primitive cohomology of $X$ is generated by tube classes, which arise from the monodromy of the vanishing homology on hyperplane sections. Clemens asks if the theorem is still true when we restrict the generating set to the tube classes over the class of a single vanishing sphere of nodal degeneration. We prove this is true for hypersurfaces in $\mathbb P^4$. The proof is based on the degeneration of a hypersurface to the union of hypersurfaces of lower degrees. 
\end{abstract}

\tableofcontents

\section*{Introduction}
Let $X\subset \mathbb P^N$ be a smooth projective variety over $\mathbb C$ of dimension $n$. Let $X_H=X\cap H$ be a general hyperplane section. Then, the Lefschetz hyperplane theorem says the homology of $X$ is captured by $X_H$ up to degree $n-1$. The question is, can $n$-th homology of $X$ be recovered from hyperplane sections?

There is a construction called tube mapping that recovers $n$-th primitive homology on $X$ by considering the monodromy of vanishing cycles on $X_H$, as $H$ moves in the space of all smooth hyperplane sections: Let $\delta\in H_{n-1}(X_H,\Z)_{van}$ be a class in the vanishing homology. Let $l\in \pi_1(\mathbb O^{sm},H)$ be a loop in the universal family of smooth hyperplane sections of $X$. Then the trace of $\delta$ along $l$ defines a relative class in $H_n(X,X_H,\Z)$. When $l$ stabilizes $\delta$ by monodromy, then the resulting class $\tau_l(\delta)$ lives in the primitive homology $H_n(X,\Z)_{prim}$. This resulting map 
\begin{equation}\label{Intro_TubeMap}
    \bigoplus_{g\in \pi_1(\mathbb O^{sm},H)} H_{n-1}(X_H,\Z)_{van}^g\to H_n(X,\Z)_{prim}
\end{equation}
$$(g,\delta)\mapsto \tau_g(\delta)$$
is called tube mapping. Assume that the vanishing homology is nonzero, Schnell proved the following.
\begin{theorem} (Schnell, \cite{Schnell})
    The tube mapping \eqref{Intro_TubeMap} has a cofinite image.
\end{theorem}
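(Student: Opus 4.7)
Since $H_n(X,\mathbb{Z})_{\prim}$ is finitely generated, cofiniteness of the image is equivalent to rational surjectivity of the tube map, so I work with $\mathbb{Q}$-coefficients throughout. The first move is to reformulate the tube map as a connecting homomorphism. The long exact sequence of the pair $(X,X_H)$, combined with the Lefschetz hyperplane theorem, produces a short exact sequence
\[
0 \to H_n(X,\mathbb{Q})_{\prim} \to H_n(X, X_H, \mathbb{Q}) \xrightarrow{\partial} H_{n-1}(X_H,\mathbb{Q})_{\van} \to 0,
\]
in which $\tau_g(\delta) = g\tilde\delta - \tilde\delta$ for any lift $\tilde\delta$ of $\delta$; the ambiguity in the lift is absorbed by the trivial $\pi_1$-action on the left-hand term. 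As $H$ varies over $\mathbb{O}^{\sm}$ this globalizes to a short exact sequence of $\mathbb{Q}$-local systems
\[
0 \to \underline{H_n(X,\mathbb{Q})_{\prim}} \to \mathcal V \to \mathcal H_{\van} \to 0,
\]
and the total image of $\tau$ equals (rationally) the kernel of the natural map $H_n(X,\mathbb{Q})_{\prim} \to \mathcal V_{\pi_1(\mathbb{O}^{\sm})}$ into the monodromy coinvariants of $\mathcal V$.

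Next I dualize. By Zariski-Lefschetz irreducibility (applicable since the vanishing homology is nonzero by hypothesis), the local system $\mathcal H^{\van}$ has no monodromy invariants, so the dual sequence forces $(\mathcal V^{\vee})^{\pi_1(\mathbb{O}^{\sm})}$ to inject into the constant local system $\underline{H^n(X,\mathbb{Q})_{\prim}}$. Rational surjectivity of the tube map is therefore equivalent to the vanishing statement
\[
H^n(X, X_H, \mathbb{Q})^{\pi_1(\mathbb{O}^{\sm})} = 0,
\]
i.e.\ no nonzero monodromy-invariant relative class can project to a nonzero primitive class on $X$.

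The third step, which is the main obstacle, is to establish this vanishing. I would appeal to Deligne's global invariant cycles theorem, applied to the universal-hyperplane-section total space $\mathcal X\subset X\times\mathbb{O}$. Although $\mathcal X$ is smooth (it is a $\mathbb{P}^{N-1}$-bundle over $X$ via the second projection), the morphism $\mathcal X\to\mathbb{O}$ is not smooth: its fibers degenerate over the discriminant locus. Granting the theorem in this relative setting, any $\pi_1(\mathbb{O}^{\sm})$-invariant class in $H^n(X, X_H, \mathbb{Q})$ must extend to a class on the ambient compact pair $(X\times\mathbb{O},\mathcal X)$; via the K\"unneth isomorphism on $X\times\mathbb{O}$ and the projective-bundle structure of $\mathcal X$ over $X$, such classes pull back from $H^n(X,\mathbb{Q})$ and therefore project to $0$ in the primitive summand. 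The principal difficulty is making this argument rigorous in the presence of singular fibers over the discriminant; this is precisely where the BBD decomposition theorem (or equivalently Saito's theory of mixed Hodge modules) is needed, as carried out in \cite{Schnell}.
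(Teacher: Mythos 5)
A preliminary remark: the paper does not prove this statement at all — it is quoted from Schnell's paper — so your proposal can only be judged on its own and against the known argument, not against a proof in the text. The decisive gap is in your second step, where you assert that the image of the tube mapping \eqref{Intro_TubeMap} equals, rationally, $H_n(X,\bbQ)_{\prim}\cap\ker\bigl(H_n(X,X_H,\bbQ)\to H_n(X,X_H,\bbQ)_{\pi_1(\mathbb O^{\sm})}\bigr)$. Only the inclusion ``$\subseteq$'' is clear: each tube class is of the form $g\tilde\delta-\tilde\delta$ with $g_*\partial\tilde\delta=\partial\tilde\delta$. A general element of the kernel of the coinvariants map is a finite sum $\sum_i(g_i v_i-v_i)$ whose \emph{total} boundary vanishes, while the individual $g_i$ need not stabilize $\partial v_i$; nothing in your argument converts such a sum into a sum of honest tube classes. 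That stabilizer condition is exactly what makes Schnell's theorem nontrivial (drop it and the statement collapses to a routine coinvariants computation), and your dualization never engages with it. So the claimed equivalence ``cofinite image $\Longleftrightarrow H^n(X,X_H,\bbQ)^{\pi_1(\mathbb O^{\sm})}=0$'' is established only in the direction you do not need, and even a complete proof of the vanishing of invariants would not yield the theorem along the route you describe.

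Two further problems sit in your last step. The inference ``such classes pull back from $H^n(X,\bbQ)$ and therefore project to $0$ in the primitive summand'' is not valid: the pullback of a primitive class of $X$ to $X\times\mathbb O$ restricts on a slice to that same primitive class. What actually makes the computation work is that the restriction $H^n(X\times\mathbb O,\bbQ)\to H^n(\mathcal X,\bbQ)$ to the incidence variety is injective (projective bundle formula), so the image of $H^n(X\times\mathbb O,\mathcal X,\bbQ)$ in $H^n(X\times\mathbb O,\bbQ)$ is already zero, and hence so is its image in $H^n(X,\bbQ)$ after restricting to the pair $(X,X_H)$; you should argue this way rather than through a K\"unneth decomposition of the classes themselves. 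Finally, the invariant cycle input for the family of \emph{pairs} over $\mathbb O^{\sm}$ is nonstandard (Deligne's theorem is stated for smooth projective families), and deferring it ``as carried out in [Schnell]'' makes the argument circular, since that is the paper whose theorem you are trying to prove. In short: the reduction in step two is unproved and is the heart of the matter, and step three is both imprecise and not self-contained.
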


In other words, over rational numbers, any primitive class on $X$ can be obtained as a finite sum of "tubes" plied up by vanishing cycles on a hyperplane section.

%State Erjuan's result

When $n=\dim(X)$ is an odd number, the middle-dimensional primitive cohomology is a pure Hodge structure and defines a complex torus $J_{prim}=F^{\frac{n+1}{2}}H^n(X,\C))_{prim}/H_n(X,\Z)_{prim}$. By varying the hyperplane sections, the vanishing (co)homology forms a local system over $\mathbb O^{sm}$. So the underlying \'etale space $T$ of the local system is naturally an analytic covering space of $\mathbb O^{sm}$. There is a real-analytic map, 
$$\Phi: T\to J_{prim},$$
called the \textit{topological Abel-Jacobi map} (c.f. \cite[p.3]{Zhang_TAJ}, \cite{Zhao}), 
It generalizes Griffiths' Abel-Jacobi map to topological cycles.

It turns out that the tube mapping is simply the associated $\pi_1$ of the topological Abel-Jacobi map — Any loop $l$ in $\pi_1(\mathbb O^{sm},H)$ lifts to $\pi_1(T,\delta)$ for $\delta\in H_{n-1}(X_H,\Z)$ if and only if $l_*\delta=\delta$ (cf. \cite[Lemma 6.2]{Zhang_Cubic3folds}). So Schnell's theorem implies that the map $\Phi$ captures enough topological information in the middle dimensional cohomology of $X$.

As a consequence of Schnell's theorem, there is at least one connected component of $T$ that realizes a cofinite image of the tube mapping. The question is which component does the job?%, such that the $\pi_1$ associated to $\Phi|_{T_{\lambda}}:T_{\lambda}\to J_{prim}$, or equivalently, restricting \eqref{Intro_TubeMap} to vanishing cycles lying on $T_{\lambda}$, has cofinite image.

Among all the components of $T$, there is a distinguished component $T_{v}$ that consists of \textit{primitive vanishing cycles}. Equivalently, there is a unique component that contains a vanishing cycle of nodal degeneration. These classes consist of classes of vanishing spheres in a hyperplane section near a hyperplane section $X_{H_0}$ that has an ordinary node.

\begin{conjecture}(Clemens)\label{Conj_Clemens}
The restriction of tube mapping on primitive vanishing cycles has a cofinite image.
\end{conjecture}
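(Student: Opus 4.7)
The proof would proceed by induction on the degree $d$ of $X \subset \mathbb{P}^4$, in line with the abstract's indication that $X$ is degenerated to a union of lower-degree hypersurfaces. The base cases are small: $d=2$ gives $H_3(X)_{\prim}=0$ and is vacuous, while $d=3$ (cubic threefolds) is essentially the content of \cite{Zhang_Cubic3folds}; a few further base cases may require separate treatment. For the inductive step, fix $d$ and assume Clemens' conjecture for all smooth degree-$<d$ hypersurfaces in $\mathbb{P}^4$. Write $X=\{F=0\}$, pick generic homogeneous forms $F_1, F_2$ of degrees $d_1, d_2 \ge 2$ with $d_1+d_2=d$, and consider the pencil $\mathcal{X}_t = \{tF + F_1 F_2 = 0\}$. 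For generic $F_1, F_2$ the total space is smooth, and the central fiber is the simple normal crossings divisor $Y_1 \cup Y_2$, where $Y_i=\{F_i=0\}$ is a smooth hypersurface of degree $d_i$ and $C = Y_1 \cap Y_2$ is a smooth surface in $\mathbb{P}^4$.

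The Clemens--Schmid exact sequence (equivalently, the weight spectral sequence for the limit mixed Hodge structure) presents $H_3(X,\mathbb{Q})_{\prim}$ as an extension whose graded pieces are accounted for by $H_3(Y_1)_{\prim} \oplus H_3(Y_2)_{\prim}$ (the ``fixed'' part, deforming from the two components) and by $H_2(C)_{\prim}$ (the ``vanishing'' part, entering through the Mayer--Vietoris boundary that topologically realizes a class $\gamma \in H_2(C)$ as the $S^1$-tube over $\gamma$ in the local smoothing model $\{xy=t\}$). It therefore suffices to cover each graded piece by tube classes on $X$ over primitive vanishing cycles of hyperplane sections of $X$. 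The fixed piece is handled immediately by the inductive hypothesis: a tube class on $Y_i$ over a primitive vanishing cycle of a nodal hyperplane section $(Y_i)_{H_0}$ extends in the family $\mathcal{X}_t$ to a tube class on $X$ over a nearby nodal hyperplane section $X_{H'}$, preserving primitivity of the vanishing cycle, since a node of $(Y_i)_{H_0}$ deforms transversely to a node of $X_{H'}$.

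The vanishing piece is the crux. Here I would apply Schnell's theorem to the smooth surface $C$, writing a generic class of $H_2(C,\mathbb{Q})_{\prim}$ as a sum of tubes $\tau_l(\epsilon)$ over vanishing $1$-cycles $\epsilon \in H_1(C_H)_{\van}$ on hyperplane sections of $C$. Parallel to the degeneration $X \rightsquigarrow Y_1 \cup Y_2$, a generic hyperplane section $X_H$ of $X$ degenerates to $(Y_1)_H \cup (Y_2)_H$ glued along $C_H$, and the $S^1$-thickening of $\epsilon$ appears on $X_H$ as a $2$-torus that represents, via Mayer--Vietoris, the image in $H_3(X)$ of the $S^1$-tube over $\tau_l(\epsilon)$. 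The next step is to replace this SNC-vanishing $2$-torus by a primitive vanishing $2$-sphere on a nearby nodal hyperplane section of $X$: as $H$ moves along a path in $\mathbb{O}^{\sm}$ toward a nodal section close to the SNC limit, a local Picard--Lefschetz argument should collapse the $S^1$-factor of the torus to produce an equivalent $2$-sphere class. The main obstacle, as I see it, is exactly this identification---showing that the SNC-vanishing tori of the $t \to 0$ degeneration and the nodal-vanishing spheres parameterizing $T_v$ lie in overlapping $\pi_1(\mathbb{O}^{\sm}, H)$-orbits, so that tube classes over primitive vanishing cycles capture the full vanishing piece. Combined with a standard genericity argument on $F_1, F_2$ ensuring sufficiently large monodromy on $C$, the two graded pieces would then yield cofinite image of the restricted tube mapping, completing the induction.
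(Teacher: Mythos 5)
Your plan diverges from the paper at a decisive point, and the place where it diverges is exactly where it breaks down. The paper never attempts to cover all of $H_3(X,\mathbb{Q})_{\prim}$ graded piece by graded piece. Its key reduction (Lemma \ref{lemma}) is that the image of the tube mapping over a primitive vanishing cycle is locally constant in smooth families (Proposition \ref{prop_localconst}) and independent of the chosen cycle (Proposition \ref{prop_conjTube}), hence is a sub-representation of the monodromy action of $\pi_1$ of a Lefschetz pencil of hyperplane sections of a hypersurface $W\subset\mathbb P^{n+2}$ containing $X$; since for a hypersurface $H_n(X,\mathbb{Q})_{\van}=H_n(X,\mathbb{Q})_{\prim}$ and that representation is irreducible, \emph{one} nonzero tube class already forces cofiniteness. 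Without this reduction you are pushed into the much harder program of realizing the whole weight-graded structure of the limit by tubes, and the step you yourself flag as the crux --- showing that the $S^1$-tubes over classes of $H_1(C_H)_{\van}$ (which appear as $2$-tori on $X_H$) lie in the monodromy orbit of the nodal vanishing spheres, so that their $3$-dimensional thickenings are tube classes over \emph{primitive} vanishing cycles --- is left entirely open, and there is no evident reason it should hold: these torus classes are not spheres of self-intersection $\pm 2$ and need not be (multiples of) primitive vanishing cycles, which is precisely the kind of obstruction the paper points out for $[P_1]-[P_2]$ on cubic fourfolds. With the irreducibility lemma in hand, the paper instead degenerates $X$ to $Y\cup P$ ($P$ a hyperplane), transports a single tube class on $Y$, supported away from $Z=Y\cap P$ (this requires the nontrivial Steps 1--2: vanishing cycles are ``finite cycles'' on the hyperplane complement, and the loops must be chosen to act trivially on $H_{n-2}(Z)$), into the nearby fiber $X_s$ while keeping it a tube over a primitive vanishing cycle (uniform Lefschetz pencil, Proposition \ref{prop_c}, Proposition \ref{prop_open2dimfamily}), and shows nonvanishing in the limit via the weight filtration on $H^n_{\lim}$ (Proposition \ref{prop_nonzero}).

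Two further concrete errors: the claim that the total space of $\{tF+F_1F_2=0\}$ is smooth for generic $F_1,F_2$ is false --- it has transversal $A_1$ singularities along $\{t=0, F=F_1=F_2=0\}$ (a curve when $X\subset\mathbb P^4$), and one must pass to a (small) resolution, as the paper does by blowing up one component, before invoking semistable degeneration or Clemens--Schmid. And in your ``fixed piece'' step, a tube class on $Y_i$ does not automatically ``extend in the family'': the diffeomorphism to the nearby fiber only exists on the complement of a neighborhood of $C=Y_1\cap Y_2$, so one must first arrange the vanishing cycle and the entire tube to be supported away from $C$ and then separately prove the transported class is nonzero; both points are substantive and occupy most of the paper's final section.
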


It is verified in the case when $\dim(X)=1$ \cite{Fu} and when $X$ is a cubic threefold \cite{Zhang_Cubic3folds}.

We will prove the conjecture for hypersurfaces in $\mathbb P^4$.

\begin{theorem}\label{MainThm}
    Let $X$ be a smooth hypersurface of $\mathbb P^4$ of degree at least 3. Let $\delta\in H_2(X_H,\Z)$ be a primitive vanishing cycle, then the tube mapping over $\delta$
    \begin{equation}\label{eqn_tubemappingPVC}
       \Phi_{*}^v: G=\{g\in \pi_1(\mathbb O^{sm},*)|g_*\delta=\delta\}\to H_3(X,\Z)_{prim},\ g\mapsto \tau_g(\delta)
    \end{equation}
    has a cofinite image.
\end{theorem}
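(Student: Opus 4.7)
\medskip
\noindent\textbf{Proof Proposal.}
The plan is to proceed by induction on $d=\deg X$, following the cue from the abstract that a smooth hypersurface degenerates to the union of hypersurfaces of lower degrees. The base case $d=3$ is the cubic threefold case, established in \cite{Zhang_Cubic3folds}.

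For the inductive step, assume the theorem for smooth hypersurfaces of degree $d-1$ in $\mathbb P^4$ and let $X$ have degree $d\geq 4$. First I would construct a one-parameter degeneration $\pi:\mathcal X\to \Delta$ whose general fiber $X_t$ is a smooth degree-$d$ hypersurface and whose central fiber is $X_0=Y\cup H$, with $Y$ a smooth degree-$(d-1)$ hypersurface and $H$ a generic hyperplane, so that the singular locus of $X_0$ is the smooth surface $S=Y\cap H\subset H\cong \mathbb P^3$ of degree $d-1$. After base change and a small blow-up along $S$, I obtain a semistable model. Clemens--Schmid together with Mayer--Vietoris for $Y\cup H$ then show that the associated graded of the limit mixed Hodge structure on $H^3(X_t,\bbQ)_{\prim}$ is built from $H^3(Y,\bbQ)_{\prim}$ and $H^2(S,\bbQ)_{\prim}$; topologically, the second piece is spanned by ``degeneration tubes'' which are circle bundles over $2$-cycles in $S$.

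Next I would transfer the tube mapping across the degeneration. Choosing a hyperplane pencil $H'$ transverse to $\pi$ so that $X_t\cap H'$ also degenerates, I arrange for the given primitive vanishing cycle $\delta\in H_2(X_H,\Z)_{\van}$ to specialize, up to monodromy, to a primitive vanishing cycle $\delta_Y$ on a hyperplane section of $Y$. The inductive hypothesis supplies a cofinite sublattice of $H_3(Y,\Z)_{\prim}$ generated by tubes $\tau_{g_Y}(\delta_Y)$ with $g_Y$ stabilizing $\delta_Y$. Each such $g_Y$ should lift, via a section of $\pi$, to a loop $g\in G$ stabilizing $\delta$, and $\tau_g(\delta)$ should agree with the lift of $\tau_{g_Y}(\delta_Y)$ modulo corrections supported on $S$. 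To cover the remaining $H^2(S,\bbQ)_{\prim}$ contribution, I would use Picard--Lefschetz loops around $0\in\Delta$: their monodromy decomposes as Dehn twists along vanishing spheres supported on $S$, which stabilize $\delta$ whenever the relevant intersection numbers vanish, and the resulting tubes realize the missing $S$-piece.

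The main obstacle will be the tube-mapping transfer step. One must verify (a) that $\delta$ really does specialize to the chosen $\delta_Y$ and not to another vanishing cycle in its monodromy orbit, which should be forced by the Zariski density of the monodromy on $H_2(X_H,\bbQ)_{\van}$ (Beauville, Deligne) together with a careful arrangement of the pencil; and (b) that tubes over auxiliary primitive vanishing cycles produced by the inductive output can be converted, using Picard--Lefschetz transformations and conjugation within $\pi_1(\mathbb O^{\sm})$, into tubes over $\delta$ itself without losing cofiniteness. The combinatorial bookkeeping required to chain ``modulo finite'' statements through the induction --- and to guarantee that the final image is genuinely cofinite in $H_3(X,\Z)_{\prim}$ rather than merely of full rank --- is where I expect the hardest technical work to concentrate.
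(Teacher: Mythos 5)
Your degeneration setup (central fiber $Y\cup H$, small resolution, limit mixed Hodge structure) matches the paper's, and your base case is right, but there is a genuine gap: you are trying to transport the \emph{entire cofinite lattice} through the degeneration, and the parts you flag as ``the hardest technical work'' --- matching $\delta$ with a specific $\delta_Y$, converting tubes over auxiliary cycles into tubes over $\delta$, realizing the $H^2(S,\bbQ)_{\prim}$ graded piece, and chaining ``modulo finite'' statements --- are precisely the steps you have not supplied, and at least one of them is conceptually off. The tube classes in the statement come from loops in $\pi_1(\mathbb O^{\sm})$ for a \emph{fixed} smooth $X$, acting on vanishing cycles of its hyperplane sections; Picard--Lefschetz monodromy around $0\in\Delta$ in the degeneration parameter acts on $H_3(X_t)$ but does not produce tube classes in this sense, so your proposed mechanism for covering the $S$-contribution of the limit mixed Hodge structure does not engage the map $\Phi_*^v$ at all. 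As written, the inductive step would only give a sublattice coming from $H_3(Y)_{\prim}$, with no argument that the image is cofinite in $H_3(X,\Z)_{\prim}$.

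The paper avoids all of this by first proving a reduction you are missing: if the tube map over a primitive vanishing cycle is \emph{nonzero}, its image is automatically cofinite (Lemma \ref{lemma}). The point is that the image is independent of the choice of primitive vanishing cycle (Proposition \ref{prop_conjTube}, using that all primitive vanishing cycles are monodromy conjugate) and locally constant in smooth families (Proposition \ref{prop_localconst}); embedding $X$ as a hyperplane section of a smooth $W\subset\mathbb P^{n+2}$, the image $\otimes\,\bbQ$ becomes an invariant subspace of the irreducible monodromy representation on $H_3(X,\bbQ)_{\prim}=H_3(X,\bbQ)_{\van}$, hence is everything once it is nonzero. With this in hand, the degeneration step (Theorem \ref{thm_degeneration}) only has to exhibit \emph{one} nonzero tube class on the nearby fiber: one pushes a tube class on $Y$ off a neighborhood of $Z=Y\cap P$ (using that vanishing cycles are ``finite cycles,'' a uniform Lefschetz pencil, and a relative Ehresmann lemma), deforms it to $X_s$ while keeping it a tube over a primitive vanishing cycle (Proposition \ref{prop_c}), and shows nonvanishing via the weight filtration on the limit cohomology (Proposition \ref{prop_nonzero}). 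Your worry (a) about which element of the monodromy orbit $\delta$ specializes to is likewise dissolved by Proposition \ref{prop_conjTube}. So the fix is not better bookkeeping in the degeneration but the irreducibility argument that reduces cofiniteness to nonvanishing.
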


Our strategy is to first reduce the proof of Theorem \ref{MainThm} to prove the image is nonzero. This essentially follows from the monodromy conjugacy property of primitive vanishing cycles and Picard-Lefschetz theory.

When $d=3$,  a primitive vanishing cycle is the difference of the classes of two disjoint lines $[L_1]-[L_2]$ on a cubic surface. Then the result follows from that the (topological) Abel-Jacobi map on cubic threefolds is holomorphic and nonzero (cf. Proposition \ref{prop_cubic3fold}). 

For $d\ge 4$, there is no obvious geometric description for primitive vanishing cycles, and the topological Abel-Jacobi map is no longer holomorphic, so it is not clear how to show tube mapping \eqref{eqn_tubemappingPVC} is nonzero directly. Instead, we inductively degenerate $X$ to the union of hypersurface $Y$ of degree $d-1$ and a hyperplane $P$, meeting transversely. To prove the tube mapping on $X$ is nonzero, we choose a uniform Lefschetz pencil in the family and try to push a nonzero tube class on $Y$ away from $Y\cap P$. Then we show it can deform to the nearby fiber. Hodge theory of degeneration guarantees that this class is nonzero.

\subsection{Higher Dimensions}
The degeneration strategy can be applied to higher dimensional hypersurfaces. However, the question will be how to describe the primitive vanishing cycles on hyperplane sections of cubic $2n-1$-fold. For example, the Abel-Jacobi map of planes on cubic 5-fold \cite{Collino} is similar to the cubic threefold case and allows us to deduce the tube mapping associated with the difference of two disjoint planes $[P_1]-[P_2]$ on a cubic fourfold is nonzero. However, $[P_1]-[P_2]$ has self-intersection 6, so it is not a primitive vanishing cycle (nor a multiple of a primitive vanishing cycle). So, to answer Conjecture \ref{Conj_Clemens}, the question is still to describe the geometry of primitive vanishing cycles and understand their (topological) Abel-Jacobi image for cubic hypersurfaces in $\mathbb P^{2n}$.\\

\noindent\textbf{Acknowledgement.} This paper is derived from the author's thesis. The author extends sincere gratitude to his advisor, Herb Clemens, for introducing him to this topic, engaging in helpful discussions over the years, and providing constant encouragement. The author would like to thank Arun Debray and Laurentiu Maxim for helpful conversations and for inviting the author to give seminar talks on the subject. Additionally, the author expresses appreciation to Gael Meigniez and Philip Tosteson for showing him the proof of Lemma \ref{appendix}.

\section{Primitive Vanishing Cycles}
Let $X\subseteq \mathbb P^N$ be a smooth projective variety of dimension $n$. Let $\mathbb O^{sm}$ be the open subspace of $\mathbb O:=(\mathbb P^N)^*$ parameterizing smooth hyperplane sections of $X$. Let $H\in \mathbb O^{sm}$ and $i:X_H\to X$ the inclusion of the hyperplane section, then the \textit{vanishing homology} of $X_H$ is 
\begin{equation}\label{eqn_vanishinghomology}
  H_{n-1}(X_H,\Z)_{\textup{van}}:=\ker \big (i_*:H_{n-1}(X_H,\Z)\to H_{n-1}(X,\Z)\big ).  
\end{equation}

Now let $H$ deform and get close to a hyperplane $H_0$ that is tangent to $X$, there is a topological $(n-1)$-sphere, called the vanishing sphere, on the nearby fiber $X_{H_t}$, such that the sphere $S^{n-1}$ goes to the node on $X_{H_0}$ when $t\to 0$.

\begin{definition}\normalfont
    Let $H$ be a general hyperplane, then we call $\delta\in H_{n-1}(X_H,\Z)$ a primitive vanishing cycle if it is monodromy conjugate to the class of a vanishing sphere. In other words, there exists a path $l\subseteq \mathbb O^{sm}$ joining $H_t$ to $H$ and the image $l_*([S^{n-1}])=\delta$ under the trivialization $H_t\cong H$ along the path.
\end{definition}

One also refers to \cite[Appendix A]{Zhang_Cubic3folds} for an equivalent description using vanishing cohomology.

\begin{proposition} \label{Prop_PVCconjugate}
The set of primitive vanishing cycles are in the same orbit of the monodromy action
\begin{equation}\label{eqn_GlobalMonodromy-General}
    \rho_{\textup{van}}:\pi_1(\mathbb O^{sm},H)\to \textup{Aut}H_{n-1}(X_H,\Z)_{van}.
\end{equation}

\end{proposition}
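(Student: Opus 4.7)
The plan is to combine the Picard--Lefschetz description of local monodromy around a simply nodal hyperplane section with the irreducibility of the discriminant hypersurface $\Delta := \mathbb{O} \setminus \mathbb{O}^{sm}$. I would first reduce the statement to the following claim: for any two nodal sections $X_{H_0}, X_{H_0'}$ with $H_0, H_0' \in \Delta^{sm}$, and any chosen nearby smooth sections $X_{H_t}, X_{H_{t'}}$, there exists a path $\alpha \subseteq \mathbb{O}^{sm}$ from $H_t$ to $H_{t'}$ along which the vanishing sphere class $[S_t^{n-1}]$ transports to $\pm [S_{t'}^{n-1}]$. Granting this, two primitive vanishing cycles $\delta_i = (l_i)_*[S_{t_i}^{n-1}]$ are related by the loop $l_1^{-1} \cdot \alpha \cdot l_2 \in \pi_1(\mathbb{O}^{sm}, H)$, which sends $\delta_1 \mapsto \pm \delta_2$. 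The residual sign is absorbed by the local Picard--Lefschetz monodromy around $H_0'$ (which sends $\delta_2 \mapsto -\delta_2$ whenever $n$ is odd, since $\delta_2 \cdot \delta_2 = \pm 2$), or else by re-choosing the orientation of the vanishing sphere, which already enters the definition of a primitive vanishing cycle only up to sign.

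To construct $\alpha$, I would use that the discriminant $\Delta \subseteq \mathbb{O}$ is the dual variety of $X \subseteq \mathbb{P}^N$, hence irreducible, being the image of the projective conormal variety (a projective bundle over the smooth irreducible $X$). Its smooth locus $\Delta^{sm}$ is therefore path-connected, and I would pick a smooth path $\gamma : [0,1] \to \Delta^{sm}$ from $H_0$ to $H_0'$. Because every section $X_{\gamma(s)}$ has a single ordinary node varying smoothly with $s$, a parametric Morse lemma along $\gamma$ yields, after choosing a continuous section $s \mapsto p_s \in \mathbb{O}^{sm}$ lying in a tubular neighborhood of $\gamma$ in $\mathbb{O}$, a continuously varying vanishing sphere class $[S_s^{n-1}] \in H_{n-1}(X_{p_s}, \mathbb{Z})$. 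The resulting path $s \mapsto p_s$, joined at its endpoints to $H_t, H_{t'}$ by short paths in $\mathbb{O}^{sm}$, is the required $\alpha$.

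The main technical ingredient will be justifying this parametric model of vanishing cycles along $\gamma$; once one observes that $\gamma$ stays entirely inside $\Delta^{sm}$ (so each fiber has a single node of Morse type), it follows from a standard relative version of the Morse lemma together with the Milnor fibration structure in families. Everything else --- the reduction to transporting vanishing spheres across $\Delta^{sm}$, the irreducibility of the dual variety, and the Picard--Lefschetz sign analysis --- is essentially formal bookkeeping with paths and loops in $\mathbb{O}^{sm}$.
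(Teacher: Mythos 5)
Your argument is correct and is essentially the paper's own proof: the paper simply invokes the irreducibility of the dual variety $X^*$ together with the cited result of Voisin (conjugacy of vanishing cycles of nodal degenerations), and your proposal just unwinds that citation --- connectedness of the smooth locus of the discriminant, parametric transport of the vanishing sphere along a path of nodal sections, and the Picard--Lefschetz sign analysis. Note only that the residual sign is genuinely absorbed by the local reflection when $n$ is odd (self-intersection $\pm 2$), which is the case relevant to the paper; for $n$ even the conjugacy, as in the cited result, holds only up to sign.
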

\begin{proof}
This follows from the fact that the dual variety $X^*$ is irreducible and all vanishing cycles of nodal degenerations $\{X_t\}_{t\in \Delta}$ obtained from above are conjugate to each other \cite[Proposition 3.23]{Voisin}.
\end{proof}

\begin{proposition}\label{Prop_PVC-generation}
The set of all primitive vanishing cycles in $H_{n-1}(X_H,\Z)_{van}$ generates a sublattice of full rank.
\end{proposition}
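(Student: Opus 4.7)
The plan is to pass to rational coefficients and show that the $\mathbb{Q}$-span $V$ of the primitive vanishing cycles is all of $H_{n-1}(X_H,\mathbb{Q})_{\textup{van}}$, which immediately implies that the sublattice $L\subseteq H_{n-1}(X_H,\Z)_{\textup{van}}$ they generate has full rank. First, $V$ is nonzero, because the class of any vanishing sphere of a nodal degeneration is, by definition, a primitive vanishing cycle; and by Proposition \ref{Prop_PVCconjugate} the set of primitive vanishing cycles forms a single orbit of the monodromy representation $\rho_{\textup{van}}$, so $V$ is stable under $\rho_{\textup{van}}$.

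The claim then reduces to the classical irreducibility of the monodromy representation on rational vanishing cohomology, since any nonzero invariant subspace of an irreducible representation is the whole space. To sketch why irreducibility holds, I would restrict to a Lefschetz pencil $\mathbb{P}^1\subseteq \mathbb{O}$ through $H$. The classes $\delta_1,\ldots,\delta_r$ of the vanishing spheres at the nodal fibers of the pencil span $H_{n-1}(X_H,\mathbb{Q})_{\textup{van}}$ and are pairwise monodromy-conjugate. For any nonzero $\pi_1$-invariant subspace $W$, the Picard-Lefschetz formula $T_{\delta_i}(x) = x \pm \langle x,\delta_i\rangle\delta_i$ gives a dichotomy: either some pairing $\langle v,\delta_i\rangle$ with $v\in W$ is nonzero, in which case $\delta_i\in W$ and then all $\delta_j\in W$ by conjugacy, so $W = H_{n-1}(X_H,\mathbb{Q})_{\textup{van}}$; or else $W$ is orthogonal to every $\delta_i$ and hence to all of vanishing cohomology, which forces $W=0$ since the intersection form restricted to vanishing cohomology is nondegenerate.

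The only real obstacle is locating a clean reference (or giving a self-contained sketch) for the irreducibility statement; once that is in hand, the proof of Proposition \ref{Prop_PVC-generation} is a one-line combination of irreducibility with Proposition \ref{Prop_PVCconjugate}.
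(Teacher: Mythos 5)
Your argument is correct, but it takes a heavier route than the paper. The paper's proof is a one-liner: the set of primitive vanishing cycles contains, as a subset, the classes of the vanishing spheres attached to the nodal fibers of a Lefschetz pencil, and those classes already generate $H_{n-1}(X_H,\mathbb{Q})_{\textup{van}}$ (Voisin, Lemma 2.26); full rank follows immediately. You instead pass through monodromy: the $\mathbb{Q}$-span of the primitive vanishing cycles is invariant because they form a single orbit (Proposition \ref{Prop_PVCconjugate}), and then irreducibility of the vanishing monodromy representation forces the span to be everything. That is a valid argument (and it is the same mechanism the paper uses later, in the proof of Lemma \ref{lemma}), but notice that your own sketch of irreducibility invokes, as an ingredient, exactly the generation statement that finishes the proof directly: since the pencil's vanishing spheres are themselves primitive vanishing cycles, citing that they span over $\mathbb{Q}$ already gives the proposition, with no need for irreducibility, Picard--Lefschetz, or nondegeneracy of the intersection form on the vanishing part. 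One small caveat in your write-up: the claim that the span is nonzero is not literally ``by definition,'' since a vanishing sphere could a priori be null-homologous; it is nonzero exactly when $H_{n-1}(X_H,\mathbb{Q})_{\textup{van}}\neq 0$ (by the spanning statement together with conjugacy), and in the degenerate case the proposition is vacuous anyway, so this does not affect correctness, only the phrasing. In short: correct, but the irreducibility detour buys nothing here; the paper's direct appeal to generation by Lefschetz-pencil vanishing cycles is both shorter and avoids the need to source the irreducibility theorem.
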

\begin{proof}
The primitive vanishing cycles contain the classes of vanishing spheres through a Lefschetz pencil as a subset. The claim follows that the latter generates the vanishing homology over $\bbQ$ \cite[Lemma 2.26]{Voisin}.
\end{proof}

\section{Properties of Tube Mapping}
In this section, we summarize some properties of tube mapping. Note the key Lemma \ref{lemma} only holds for primitive vanishing cycles.
\subsection{Lefschetz Pencil} \label{sec_LefPencil}
Let $X\subseteq \mathbb P^N$ be a smooth projective variety. Then there is a line $\mathbb L\subseteq \mathbb O$, called Lefschetz pencil \cite[Section 2.1]{Voisin}, such that

\begin{itemize}
    \item[(1)] For general $t\in \mathbb L$, the hyperplane section $X_{H_t}$ is smooth.
    \item[(2)] There are at most finitely many $t_{1},\ldots,t_{d^*}\in \mathbb L$ such that $X_{H_{t_i}}$ is singular. Moreover, the singularity is a single ordinary node.
\end{itemize}

$d^*=\deg(X^*)$ is the degree of the dual variety. 

Let $U_{X}$ be the open subspace by removing the points $t_i$ from the pencil. Zariski showed that 

\begin{lemma} \cite[Theorem 3.22]{Voisin}
    $\pi_1(U_X,*)\to \pi_1(\mathbb O^{sm},*)$ is surjective.
\end{lemma}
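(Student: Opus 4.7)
The plan is to recognize this as an instance of the Zariski--Lefschetz hyperplane theorem applied to the complement of the dual variety. First I would observe that $\mathbb O^{sm} = \mathbb O \setminus X^*$, where $X^* \subset \mathbb O = (\mathbb P^N)^*$ is the dual variety of hyperplanes tangent to $X$ --- the same $X^*$ whose irreducibility is used in Proposition \ref{Prop_PVCconjugate}. By construction a Lefschetz pencil $\mathbb L$ is a line in $\mathbb O$ meeting $X^*$ transversely at exactly $d^* = \deg X^*$ smooth points, so $U_X = \mathbb L \setminus \{t_1,\ldots,t_{d^*}\} = \mathbb L \cap \mathbb O^{sm}$, and what has to be shown is the classical statement that the inclusion of a general line section of the complement of a projective subvariety induces a surjection on $\pi_1$.

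The second step is to invoke Zariski--Lefschetz in the following form: for a closed subvariety $Z \subset \mathbb P^M$ and a sufficiently general linear subspace $L \subset \mathbb P^M$ of dimension $k$, the inclusion $L \setminus Z \hookrightarrow \mathbb P^M \setminus Z$ is $k$-connected; in particular, for $k = 1$ it is surjective on $\pi_1$. Applied with $Z = X^*$ and $L = \mathbb L$, this gives the claim directly. If one wants a self-contained argument, the usual proof proceeds by induction on codimension, reducing to the hyperplane case, which in turn follows from applying stratified Morse theory (or Hamm--L\^e) to the affine variety $\mathbb P^M \setminus H$: a generic linear Morse function on this affine variety has all critical points of index $\leq M - 1$, which forces the required connectivity of the pair $(\mathbb P^M \setminus Z, H \setminus Z)$.

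The main, and essentially only, point to check is that a Lefschetz pencil --- carrying the extra geometric constraint that every singular fiber be a single ordinary node --- really does qualify as the ``general line'' demanded by Zariski--Lefschetz. This is a bookkeeping issue rather than a genuine obstacle: the transversality condition ``$\mathbb L$ meets $X^*$ transversely at smooth points'' and the Lefschetz-pencil conditions on $\mathbb L$ each cut out nonempty Zariski open subsets of the Grassmannian of lines in $\mathbb O$, so their intersection is nonempty Zariski open and a generic Lefschetz pencil lies in it. With that verified, the surjection $\pi_1(U_X, *) \twoheadrightarrow \pi_1(\mathbb O^{sm}, *)$ follows immediately.
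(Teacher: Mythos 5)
Your argument is correct and is essentially the content of the result the paper simply cites (Voisin, Theorem 3.22, i.e.\ Zariski's theorem): identify $\mathbb O^{sm}$ with the complement of the dual variety $X^*$ and apply the Zariski--Lefschetz theorem for generic line sections of complements of closed subvarieties. One small simplification: the genericity bookkeeping at the end is not really needed, since a Lefschetz pencil by definition avoids $\mathrm{Sing}(X^*)$ and meets the smooth locus transversely, hence is automatically transverse to a Whitney stratification of $X^*$, so the Hamm--L\^e form of the theorem applies to it directly.
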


Consequently, the tube mapping \eqref{eqn_tubemappingPVC} can be expressed as tubes along loops contained in a Lefschetz pencil.

\subsection{Tube Mapping for a Smooth Family}
The construction of tube mapping is topological, so it should be unchanged when the manifold deforms, as long as the topology of the hyperplane sections in a Lefschetz pencil is unchanged.

More precisely, let $\{X_s\}_{s\in \Delta}$ be a deformation of a smooth projective variety $X=X_0$ in $\mathbb P^N$, then by Ehresmann's theorem the cohomology $H_n(X)\cong H_n(X_s)$ by the trivialization $X\cong X_t$. So we can compare the tube mapping image for $s\in \Delta$. In fact, this is unchanged.

\begin{proposition}\label{prop_localconst}
    The image of the tube mapping on a primitive vanishing cycle is constant in the family $\{X_s\}_{s\in \Delta}$. In particular, the image of tube mapping is invariant under smooth deformation.
\end{proposition}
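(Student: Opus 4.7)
My plan is to realize the tube mapping as a purely topological construction and show that it commutes with a parametrized Ehresmann trivialization of the family. Since $H_n(X_s,\Z)_{\prim}$ forms a local system over $\Delta$ and $\Delta$ is connected, it suffices to prove local constancy of the image in $s$ after identifying fibers via Ehresmann. This reduces the proposition to a bookkeeping exercise about compatible trivializations.

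The first step is to fix a line $\mathbb{L}\subset \mathbb{O}$ that is simultaneously a Lefschetz pencil for every $X_s$ with $s$ in a shrunken neighborhood of $0$; this is possible because the dual varieties $X_s^{*}$ vary continuously, so a line transverse to $X_0^{*}$ remains transverse for small $s$. By the surjectivity lemma recalled in Section \ref{sec_LefPencil}, the tube mapping image on a primitive vanishing cycle is already generated by tubes along loops in the finite-point complement $U_{X_s}\subset\mathbb{L}$, so it is enough to verify invariance of the pencil tube mapping. A parametrized Ehresmann trivialization of the universal hyperplane section family over $U\times\Delta$ (where $U$ avoids the finitely many, continuously varying critical values for all $s$) then identifies $H_{n-1}(X_{0,H_t},\Z)_{\van}$ with $H_{n-1}(X_{s,H_t},\Z)_{\van}$ compatibly with the $\pi_1(U,*)$-action, and by Proposition \ref{Prop_PVCconjugate} it sends primitive vanishing cycles to primitive vanishing cycles. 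So a single fixed $\delta$ can be tracked through the deformation.

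Finally, the class $\tau_g(\delta)\in H_n(X_s,X_{s,H},\Z)$ is by construction the trace of $\delta$ under parallel transport along $g$, a purely topological operation that is preserved by any smooth trivialization. Composing with the global Ehresmann trivialization of $\mathcal{X}\to\Delta$, these traces agree across $s$, so they yield the same class in $H_n(X_s,\Z)_{\prim}$ under the connecting homomorphism, which establishes local constancy of the image. The main technical obstacle is coordinating the two Ehresmann trivializations—the one on $\mathcal{X}\to\Delta$ and the one on the pencil family over $U\times\Delta$—so that a single choice of trivializing vector fields respects both base directions simultaneously; this is routine once $U\times\Delta$ is open in $\mathbb{L}\times\Delta$ and the critical locus is cut out smoothly, but it is the step where essentially all the care is needed.
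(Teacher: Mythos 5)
Your argument is correct and is essentially the paper's own proof: both choose a uniform Lefschetz pencil (transversality of $\mathbb{L}$ to $X_s^{*}$ being an open condition in $s$), shrink $\Delta$ and delete small neighborhoods of the critical values to obtain a common parameter space $U$, and then use local triviality (Ehresmann) of the resulting family of hyperplane sections to identify the stabilizer subgroup and the tube classes across the fibers $X_s$. The only quibble is that Proposition \ref{Prop_PVCconjugate} (monodromy conjugacy within a single fiber) is not quite what guarantees the transported class is still a primitive vanishing cycle for $X_s$ --- that follows from the vanishing sphere near a continuously varying nodal member of the uniform pencil deforming with $s$ --- but the paper's proof asserts this point at the same level of detail.
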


\begin{proof}
First, up to shrink the disk, we can choose a \textit{uniform} Lefschetz pencil for the family $\{X_s\}_{s\in \Delta}$ in the sense that there is a line $\mathbb L\subseteq (\mathbb P^N)^*$ such that property (1) and (2) are satisfied for all $s\in \Delta$. This is obvious because the Lefschetz pencil $\mathbb L$ for $X_0$ intersects the dual variety $X_0^*$ transversely at smooth points. Then $\mathbb L$ is transverse to $X_s^*$ at distinct points $t_{1,s},\ldots, t_{d^*,s}$ for $|s|$ small and is automatically a Lefschetz pencil for $X_s$.

Next, up to further shrink the disk $\Delta$ and remove suitably small closed neighborhood of $t_{1,0},\ldots, t_{d^*,0}$ in $\mathbb L$, there is an analytic open subset $U_{\mathbb L}\subseteq \mathbb L$ such that 

\begin{itemize}
    \item[(3)] For all $s\in \Delta$ and all $t\in U_{\mathbb L}$, $X_s\cap H_t$ is smooth. 
\end{itemize}

Finally, we choose a common base point $t_0\in U_{\mathbb L}$, and a primitive vanishing cycle $\delta_0\in H_{n-1}(X_s\cap H_{t_0},\Z)$ for all $t\in \Delta$. Then the subgroup $G_s\subseteq \pi_1(U_{\mathbb L},t_0)$ that stabilizes the primitive vanishing cycle $\delta_0$ in the hyperplane sections $\{X_s\cap H_t\}_{t\in U_{\mathbb L}}$ are the same $G$ since the topology of the family is the same. Therefore, the tube mapping for $X_s$ 
$$G\to H_{n}(X_s,\Z)_{prim}\cong H_n(X,\Z)_{prim}$$
is constant for the family $\{X_s\}_{s\in \Delta}$.
\end{proof}

\subsection{Monodromy Conjugation}
The tube mapping associated with a primitive vanishing cycle $\delta$ is to take the subgroup $G\subseteq \pi_1(U_X,*)$ which stabilizes $\delta$ and consider the homomorphism
\begin{equation}\label{eqn_tubemap}
    G\to H_n(X,\Z)_{prim}. 
\end{equation}

\begin{proposition}\label{prop_conjTube}
    The image of \eqref{eqn_tubemap} is independent of the choice of $\delta$.
\end{proposition}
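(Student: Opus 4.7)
The plan is to use Proposition \ref{Prop_PVCconjugate}, which says all primitive vanishing cycles lie in a single monodromy orbit, and then verify that tubes are compatible with conjugation at the chain level. Concretely, given two primitive vanishing cycles $\delta, \delta' \in H_{n-1}(X_H,\Z)_{\van}$, I would first pick some $h \in \pi_1(\mathbb{O}^{\sm}, *)$ with $h_*\delta = \delta'$. Letting $G, G' \subseteq \pi_1(\mathbb{O}^{\sm},*)$ denote the stabilizers of $\delta$ and $\delta'$ respectively, the relation $G' = h G h^{-1}$ is immediate from the definitions, so I only need to produce a map between the tube images.

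The heart of the argument is the identity
\[
\tau_{hgh^{-1}}(\delta') \;=\; \tau_g(\delta) \quad \text{in } H_n(X,\Z)_{\prim} \text{ for all } g \in G.
\]
I would prove this by decomposing the trace along the concatenated loop $h \cdot g \cdot h^{-1}$ into three chains $C_1, C_2, C_3$: tracing $\delta'$ along $h^{-1}$ yields $C_1$ with $\partial C_1 = \delta - \delta'$; tracing $\delta$ along $g$ yields $C_2$ with $\partial C_2 = 0$ since $g \in G$; and tracing $\delta$ along $h$ yields $C_3$ with $\partial C_3 = \delta' - \delta$. Because the trace construction is orientation-reversing under reversal of the path, one has $C_1 = -C_3$ as singular chains in $X$, so $C_1 + C_2 + C_3 = C_2 = \tau_g(\delta)$. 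The left-hand side represents $\tau_{hgh^{-1}}(\delta')$ by construction, giving the identity.

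Once the identity holds, the set $\{\tau_{g'}(\delta') : g' \in G'\}$ equals $\{\tau_{hgh^{-1}}(h_*\delta) : g \in G\} = \{\tau_g(\delta) : g \in G\}$, so the two tube images in $H_n(X,\Z)_{\prim}$ coincide. Since $\delta$ and $\delta'$ were arbitrary primitive vanishing cycles, this proves independence.

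The main obstacle is the orientation/sign bookkeeping for $C_1 = -C_3$, i.e., making sure the chain-level trace construction used to define $\tau_g$ is genuinely $\pi_1$-equivariant rather than merely well-defined modulo boundaries that one still has to verify are primitive. Once the trace is set up as a relative chain in $H_n(X, X_H)$ whose boundary lies in $H_{n-1}(X_H,\Z)_{\van}$ and that descends to $H_n(X,\Z)_{\prim}$ upon stabilization (as in the setup around \eqref{Intro_TubeMap}), the computation above is essentially formal. No new input beyond Proposition \ref{Prop_PVCconjugate} is needed.
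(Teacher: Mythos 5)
Your proposal is correct and follows essentially the same route as the paper: conjugate the stabilizer via Proposition \ref{Prop_PVCconjugate}, decompose the trace along the conjugated loop into three pieces, and cancel the two outer pieces because reversing the path negates the trace, leaving exactly $\tau_g(\delta)$. The only differences are notational (your $h$ versus the paper's $l$, with the conjugation written in the opposite order), so no further comment is needed.
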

\begin{proof}
By Proposition \ref{Prop_PVCconjugate}, for any two primitive vanishing cycle $\delta$ and $\delta'$, there is a loop $l\in \pi_1(U_X,*)$ such that $l_*\delta'=\delta$. So if $G$ stabilizes $\delta$, $G'=l^{-1}.G.l$ stabilizes $\delta'$.

Now we need to show 
\begin{equation}\label{eqn_conjTube}
   \tau_g(\delta)=\tau_{l^{-1}.g.l}(\delta').
\end{equation}

We can express $\tau_{l^{-1}.g.l}(\delta')$ as sum of three relative classes
$$\tau_{l^{-1}}(g_*l_*(\delta'))+\tau_{g}(l_*(\delta'))+\tau_{l}(\delta')=\tau_{l^{-1}}(\delta)+\tau_{g}(\delta)+\tau_{l}(\delta')$$
in $H_n(X,X_H,\Z)$. Since $\delta$ and $\delta'$ are at the two ends of $l$, by definition of tube mapping, $\tau_{l^{-1}}(\delta)=-\tau_{l}(\delta')$, so the first and the last term cancels out, which establishes the equality \eqref{eqn_conjTube}.
\end{proof}

\subsection{A Key Lemma} From now on, we assume $X$ is a smooth hypersurface. The following lemma is based on the various properties of tube mapping and primitive vanishing cycles mentioned above. 
\begin{lemma}\label{lemma}
Suppose $X\subset \mathbb P^{n+1}$ is a smooth hypersurface. Assume the tube map \eqref{eqn_tubemap} is nonzero, then the image of the tube map is cofinite.
\end{lemma}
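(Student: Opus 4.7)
The plan is to realize the image $I$ of the tube map \eqref{eqn_tubemap} as a subgroup of $H_n(X,\mathbb{Z})_{\mathrm{prim}}$ that is invariant under the monodromy of the universal family of smooth hypersurfaces in $\mathbb P^{n+1}$, and then to invoke the classical irreducibility of this monodromy to force $I\otimes \mathbb Q$ to fill out $H_n(X,\mathbb Q)_{\mathrm{prim}}$. By Proposition \ref{prop_conjTube}, $I$ is independent of the choice of primitive vanishing cycle, so it is intrinsically attached to the pair $(X,\text{orbit of primitive vanishing cycles})$ and makes sense as a subgroup of $H_n(X_s,\mathbb Z)_{\mathrm{prim}}$ at every point $s$ of the moduli of smooth hypersurfaces.

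First I would promote disk-local constancy to monodromy invariance. Let $\mathcal U\subseteq \mathbb P(H^0(\mathbb P^{n+1},\mathcal O(d)))$ be the open subset parametrizing smooth hypersurfaces of degree $d=\deg X$; this space is irreducible, hence path-connected. For any analytic disk $\Delta\subseteq \mathcal U$ through $X$, Proposition \ref{prop_localconst} tells us that $I(X_s)=I(X_0)$ under the Ehresmann trivialization $X_s\cong X_0$. Covering an arbitrary closed loop in $\mathcal U$ by finitely many such disks and patching the local trivializations, one concludes that $I$ is preserved by the monodromy representation $\rho\colon \pi_1(\mathcal U,X)\to \mathrm{Aut}(H_n(X,\mathbb Z)_{\mathrm{prim}})$.

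Next I would invoke irreducibility. For smooth hypersurfaces of degree $d\ge 2$ in $\mathbb P^{n+1}$, a classical theorem (Lefschetz, Deligne, Beauville) says that $\rho\otimes \mathbb Q$ acts irreducibly on $H_n(X,\mathbb Q)_{\mathrm{prim}}$, the geometric monodromy group being Zariski dense in the relevant orthogonal or symplectic group of the primitive intersection form. Under the hypothesis $I\neq 0$, the subspace $I\otimes \mathbb Q\subseteq H_n(X,\mathbb Q)_{\mathrm{prim}}$ is a nonzero $\rho$-invariant subspace, so irreducibility forces $I\otimes\mathbb Q = H_n(X,\mathbb Q)_{\mathrm{prim}}$. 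Hence $I$ has finite index in $H_n(X,\mathbb Z)_{\mathrm{prim}}$, i.e., the tube map has cofinite image.

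The main obstacle I anticipate is the patching step: turning the disk-local constancy of Proposition \ref{prop_localconst} into genuine monodromy invariance along an arbitrary loop in $\mathcal U$. This should be an open-cover argument along a chain of overlapping analytic disks, but it requires checking that the identifications $I(X_s)=I(X_0)$ furnished by Proposition \ref{prop_localconst} are compatible under the Ehresmann trivializations on overlaps — equivalently, that the uniform Lefschetz pencils used in its proof can be chosen coherently from one chart to the next along a path. Once this compatibility is verified, the rest of the proof is a direct appeal to the known irreducibility of the monodromy of the universal family of hypersurfaces.
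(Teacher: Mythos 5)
Your argument is correct and has the same skeleton as the paper's proof: show the image $I$ of \eqref{eqn_tubemap} is invariant under an irreducible monodromy representation on $H_n(X,\mathbb Q)_{\mathrm{prim}}$ (using Propositions \ref{prop_localconst} and \ref{prop_conjTube} for invariance), then let irreducibility plus the nonvanishing hypothesis force $I\otimes\mathbb Q=H_n(X,\mathbb Q)_{\mathrm{prim}}$. The difference lies in which family supplies the irreducibility. You work over the full universal family $\mathcal U$ of smooth degree-$d$ hypersurfaces in $\mathbb P^{n+1}$ and quote the Lefschetz--Deligne--Beauville theorem for its monodromy; the paper instead embeds $X$ as a hyperplane section of a smooth hypersurface $W\subset\mathbb P^{n+2}$, takes a Lefschetz pencil of hyperplane sections of $W$ through $X$, notes that $H_n(W)_{\mathrm{prim}}=0$ so vanishing homology of $X$ equals primitive homology, and then invokes the textbook irreducibility of the monodromy on vanishing cohomology (\cite[Theorem 3.27]{Voisin}). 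The two inputs are close cousins (the universal-family statement is itself usually proved by the same Lefschetz-pencil argument applied to the $d$-uple Veronese image of $\mathbb P^{n+1}$), so your route is legitimate; what the paper's version buys is a one-dimensional base with the irreducibility in exactly the form already cited elsewhere in the text, while yours avoids the auxiliary $W$ at the cost of citing the hypersurface-monodromy theorem directly. Also, the ``patching'' issue you flag as the main obstacle is not a genuine one: the identifications furnished by Proposition \ref{prop_localconst} are parallel transport in the local system $R^n$ of the family (Ehresmann trivializations over a simply connected base all induce the same map on homology), so local constancy of $I$ automatically makes it a sub-local-system, and the only real point to check when a loop is closed up --- that the image computed from the transported vanishing cycle agrees with the original --- is exactly what Proposition \ref{prop_conjTube} provides, which is also how the paper dispatches it.
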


\begin{proof}
We can choose a smooth hypersurface $W\subset\mathbb P^{n+2}$ containing $X$ as a smooth hyperplane section. Choose a general pencil $\mathbb L_W$ of hyperplane sections of $W$ passing through $X=W\cap H_{s_0}$ and let $U_W\subseteq \mathbb L_W$ be the open subspace parametrizing smooth hyperplane sections in the pencil. Then there is a monodromy action
\begin{equation}\label{eqn_Mon4fold}
   \rho:\pi_1(U_W,s_0)\to \textup{Aut}H_{n}(X,\mathbb Q)_{prim}. 
\end{equation}

Lefschetz hyperplane theorem implies the pushforward $H_{n}(W)\to H_{n}(\mathbb P^{n+2})$ is an isomorphism, so the primitive homology $H_{n}(W)_{prim}$ is zero. In particular the vanishing homology group $H_{n}(X,\bbQ)_{van}$ coincides with $H_{n}(X,\bbQ)_{prim}$. By a classical result \cite[Theorem 3.27]{Voisin}, the representation $\rho$ is \textit{irreducible}. 

On the other hand, by Proposition \ref{prop_localconst}, the image of the tube mapping $\textup{Im}(\Phi_{*}^v)$ is locally constant, so it makes sense to talk about the monodromy of $\textup{Im}(\Phi_{*}^v)$ under the action of $\rho$. This action is invariant as a consequence of Proposition \ref{prop_conjTube}.

Consequently, by tensoring with $\bbQ$, $\textup{Im}(\Phi_{*}^v)\otimes \mathbb Q$ is a $\rho$-sub-representation of $H_{n}(X,\bbQ)_{prim}$. Now, the irreduciblity of $\rho$ together with our assumption implies that $\textup{Im}(\Phi_{*}^v)\otimes \mathbb Q$ has to be the whole $H_{n}(X,\mathbb Q)_{prim}$. Therefore, $\textup{Im}(\Phi_{*}^v)\subseteq H_{n}(X,\mathbb Z)_{\textup{prim}}$ is cofinite. 

\end{proof}

\section{Cubic Threefolds}
In this section, we will give a proof of Theorem \ref{MainThm} for the case when $d=3$ using Lemma \ref{lemma}. In fact, a stronger result is proved in \cite[Proposition 6.2]{Zhang_Cubic3folds} — the tube mapping on primitive vanishing cycle \eqref{eqn_tubemappingPVC} is subjective over $\Z$. The proof is based on relating a compactification of $T_v$ to the theta divisor of the intermediate Jacobian of $X$.

\begin{proposition}\label{prop_cubic3fold}
    Tube mapping for a smooth cubic threefold is nonzero and, therefore, has a cofinite image.
\end{proposition}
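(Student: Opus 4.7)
The plan is to exhibit a single loop for which the tube class is visibly nonzero, and then invoke Lemma \ref{lemma} to upgrade nonvanishing into a cofinite image.

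First, I would use the classical geometry of a smooth cubic surface to pin down primitive vanishing cycles concretely. The vanishing homology $H_2(X_H,\mathbb Z)_{\van}$ of a smooth cubic surface $X_H\subset \mathbb P^3$ is the rank-$6$ root lattice $E_6$, and every pair of disjoint (skew) lines $L_1, L_2 \subset X_H$ produces a class $\delta := [L_1]-[L_2]$ of self-intersection $-2$. One verifies, by specializing $X_H$ to a nodal cubic surface on which $L_1$ and $L_2$ coalesce to a node, that $\delta$ is the class of a vanishing sphere, hence a primitive vanishing cycle.

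Second, I would interpret the tube class geometrically through the Fano surface $F(X)$ of lines on $X$. For a loop $g \in G$ that stabilizes $\delta$, monodromy carries $L_1$ and $L_2$ around closed paths $\gamma_1, \gamma_2 \subset F(X)$; the two lines then sweep out closed real $3$-chains $\Sigma_1, \Sigma_2 \subset X$, and one checks that
\[
\tau_g(\delta)=[\Sigma_1]-[\Sigma_2]\in H_3(X,\mathbb Z)_{\prim}.
\]
Thus the tube map factors as the composition of the monodromy map on pairs of lines with (the difference of) the map $H_1(F(X),\mathbb Z) \to H_3(X,\mathbb Z)_{\prim}$ induced by the Abel-Jacobi image of a single line.

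Third, I would appeal to Clemens-Griffiths: the Abel-Jacobi map $F(X)\to J(X)$ is a holomorphic embedding whose image generates the intermediate Jacobian as a group, so the induced map $H_1(F(X),\mathbb Z)\to H_1(J(X),\mathbb Z)= H_3(X,\mathbb Z)_{\prim}$ is surjective. Since $F(X)$ has positive first Betti number and the monodromy of a Lefschetz pencil on $X$ acts by a large subgroup of $\Aut(H_2(X_H,\mathbb Z)_{\van})$ (in particular realizing enough deformations of pairs of skew lines), one can choose $g \in G$ whose image $\gamma_1$ is nonzero in $H_1(F(X),\mathbb Z)$ while $\gamma_2$ stays null-homologous (or simply $\gamma_1 \neq \gamma_2$ in $H_1$). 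For such $g$, $\tau_g(\delta)\neq 0$, and Lemma \ref{lemma} gives cofinite image.

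The main obstacle, I expect, is step two: rigorously matching the topologically defined tube class $\tau_g(\delta)$ with the Abel-Jacobi class of the swept surfaces. This is precisely the content of the compatibility between the topological Abel-Jacobi map $\Phi:T_v\to J_{\prim}$ and Griffiths' Abel-Jacobi map on algebraic $1$-cycles, and it is made clean in the cubic threefold case by the fact, noted in the Introduction, that $\Phi$ is holomorphic there. Once this comparison is in hand, nonvanishing of the tube map is immediate from the nontriviality of the intermediate Jacobian.
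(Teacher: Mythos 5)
Your setup (primitive vanishing cycles as differences $[L_1]-[L_2]$ of skew lines, the cylinder/Abel--Jacobi interpretation via $F(X)$, and Clemens--Griffiths giving $H_1(F(X),\mathbb Z)\cong H_3(X,\mathbb Z)_{\prim}$) is sound, but the final step is a genuine gap. The existence of a loop $g\in G$ for which ``$\gamma_1$ is nonzero in $H_1(F(X),\mathbb Z)$ while $\gamma_2$ stays null-homologous (or simply $\gamma_1\neq\gamma_2$)'' is exactly the nonvanishing statement you are trying to prove, and the appeal to ``large monodromy'' does not deliver it: the available loops are severely constrained. They must lie over loops in the open pencil $U\subseteq\mathbb L$, the only lines that appear are those meeting the plane cubic base-locus curve (a one-dimensional incidence curve over $U$, not all of $F(X)$), and the class $\tau_g(\delta)$ is forced to be the \emph{difference} $[\Sigma_1]-[\Sigma_2]$ of two swept cycles attached to the same $g$ --- you cannot prescribe $\gamma_1$ and $\gamma_2$ independently. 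Surjectivity of $H_1(F(X),\mathbb Z)\to H_3(X,\mathbb Z)_{\prim}$ says nothing about whether this particular difference ever survives. There is also a smaller issue you should address: a loop stabilizing the class $\delta=[L_1]-[L_2]$ need not stabilize the pair $(L_1,L_2)$, since each root of $E_6$ is represented by six ordered pairs of disjoint lines, so the paths $\gamma_1,\gamma_2$ need not even be closed for a general $g\in G$ (this is fixable by passing to the finite-index subgroup stabilizing the pair, but it must be said).

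The paper closes precisely this gap by a compactness argument that avoids exhibiting any explicit loop: the $72$-fold cover $C_U=T_v\times_{\mathbb O^{\sm}}U\to U$ is a Riemann surface on which the topological Abel--Jacobi map $\Phi=\int_{L_{2,t}}^{L_{1,t}}$ is holomorphic and nonconstant; it extends holomorphically to the compactification $C\to\mathbb P^1$, and if the induced map $\pi_1(C)\to\pi_1(J(X))$ were zero, $\bar\Phi$ would lift to the universal cover $\mathbb C^5$ and, by compactness of $C$, be constant --- a contradiction. Since the tube map on the pencil factors through $\pi_1(C_U)\twoheadrightarrow\pi_1(C)\to\pi_1(J(X))=H_3(X,\mathbb Z)_{\prim}$, its image is nonzero, and Lemma \ref{lemma} finishes. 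If you want to salvage your route, you would need an argument of comparable strength (e.g.\ this holomorphicity-plus-compactness trick, or an explicit computation of the swept cycles for a concrete degeneration) in place of the unsupported choice of $g$.
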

\begin{proof}
For cubic threefold, the covering map $\pi:T_v\to \mathbb O^{sm}$ is finite of degree 72. Take a Lefschetz pencil $\mathbb L$ and let $U=\mathbb L\cap \mathbb O^{sm}$. Then the fiber product
\begin{equation}\label{eqn_cubicTU}
   C_U:=T_v\times_{\mathbb O^{sm}} U\to U 
\end{equation}
is a connected covering space between Riemann surfaces. %Let 

It extends to a branched cover between compact Riemann surfaces
$$C\to \mathbb L\cong \mathbb P^1.$$

On the other hand, the topological Abel-Jacobi map $\Phi:C_U\to J(X)$ is holomorphic. One way to see holomorphicity is that $\Phi$ can be expressed as $\int_{L_{2,t}}^{L_{1,t}}$ in a branch of $C_U$, where $L_{1,t}$ and $L_{2,t}$ are disjoint lines on cubic surface $X_{H_t}$. Since $C$ is smooth with complex dimension one, $\Phi$ extends to a holomorphic map
 $\bar{\Phi}:C\to J(X)$. Then clearly, its induced map on the fundamental group is nonzero. Otherwise, $\bar{\Phi}$ lifts to the universal cover $C\to \mathbb C^5$ and has to be constant, which contradicts the nontriviality of the Abel-Jacobi map.

Finally, the tube map on Lefschetz pencil factors through $\pi_1(C_U,*)\twoheadrightarrow \pi_1(C,*)\to \pi_1(J(X),*)$, so the image is nonzero. Now the results follow from Lemma \ref{lemma}.
\end{proof}

\section{Lefschetz Pencil in a Family}\label{sec_LefschetzInFamily}
In this section, we will provide some results in preparation for the next section. 
\subsection{Degeneration of Dual Varieties}

Consider a one-parameter family of hypersurfaces of degree $d$ degenerate to the union of two hypersurfaces of lower degrees. Let $F_d$, $F_{d_1}$, and $F_{d_2}$ be the defining equation of a degree $d$ hypersurface, and let $F_{d_1}$ and $F_{d_2}$ be the defining equation of a degree $d, d_1$, and $d_2$ hypersurfaces in $\mathbb P^{n+1}$, respectively. Assume the three equations are general. Consider the one-parameter family defined by 
\begin{equation}\label{eqn_degdual}
    F_s:=sF_d+F_{d_1}F_{d_2}.
\end{equation}

Then, for each $s\neq 0$, the dual variety of $X_s=\{F_s=0\}$ is an irreducible hypersurface of degree $d^*=d(d-1)^n$ in the dual space. The question is, what is the dual variety when $t=0$? Of course the dual variety of $F_{d_1}=0$ and $F_{d_2}=0$ are components, but what else? We give an answer in \cite{Zhang} by taking the closure of $\{X_s^*\}_{s\in \Delta^*}$ inside $(\mathbb P^{n+1})^*\times \Delta$ and define   $X_0^*$ as the scheme-theoretic fiber over $0$. $X_0^*$ is called the dual variety in the limit associated with the family \eqref{eqn_degdual}.

\begin{theorem}\cite{Zhang}
    $\{X_s^*\}_{s\in \Delta^*}$ is reducible and consists of four components: $X_{d_1}^*$, $X_{d_2}^*$, $(X_{d_1}\cap X_{d_2})^*$ (with multiplicity two) and $(X_{d}\cap X_{d_1}\cap X_{d_2})^*$.   
\end{theorem}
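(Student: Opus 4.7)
The plan is to analyze the relative dual variety scheme-theoretically via the incidence correspondence
$$\mathcal I:=\big\{(x,H,s)\in \mathbb P^{n+1}\times (\mathbb P^{n+1})^*\times \Delta^*\ :\ x\in X_s,\ T_xX_s\subseteq H\big\}.$$
The projection to $(\mathbb P^{n+1})^*\times \Delta^*$ realizes the total family $\{X_s^*\}_{s\in\Delta^*}$ as its image. Taking the closure $\overline{\mathcal I}\subseteq \mathbb P^{n+1}\times (\mathbb P^{n+1})^*\times \Delta$ and projecting the central fiber $\overline{\mathcal I}_0$ to $(\mathbb P^{n+1})^*$ gives $X_0^*$ with its scheme structure. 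The strategy is then to stratify $\overline{\mathcal I}_0$ according to where the first projection lands inside $X_0=X_{d_1}\cup X_{d_2}$, and to identify each claimed component of $X_0^*$ with one stratum.

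First I would handle the smooth locus of $X_0$. At a point $p\in X_{d_1}\setminus X_{d_2}$ one has $F_{d_2}(p)\neq 0$, so near $p$ the equation $F_s=sF_d+F_{d_1}F_{d_2}=0$ is, after dividing by the nowhere-vanishing factor $F_{d_2}$, a smooth perturbation of $F_{d_1}=0$. The tangent hyperplanes to $X_s$ at the perturbed points therefore converge to the tangent hyperplanes to $X_{d_1}$ at $p$, producing the component $X_{d_1}^*$ with multiplicity one. The symmetric argument yields $X_{d_2}^*$.

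The core of the argument is the analysis over the singular locus $X_{d_1}\cap X_{d_2}$. At a point $p$ with $F_d(p)\neq 0$, choose analytic coordinates so that $F_{d_1}=u$ and $F_{d_2}=v$ to leading order. Then locally $F_s=uv+sF_d(p)+(\text{higher})$, the standard smoothing of an ordinary double point. A direct calculation of the Gauss map on this local model shows that the tangent hyperplanes at nearby points of $X_s$ all contain $T_p(X_{d_1}\cap X_{d_2})$ in the limit and parameterize the pencil of such hyperplanes as a two-to-one covering; this yields $(X_{d_1}\cap X_{d_2})^*$ with multiplicity two. When additionally $F_d(p)=0$, the leading smoothing degenerates, an extra direction for the limiting tangent hyperplane opens up, and a separate component supported on $(X_d\cap X_{d_1}\cap X_{d_2})^*$ materializes from the blow-up of the conormal picture along this deeper stratum.

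The hardest step I expect is twofold. First, verifying that the codimension-three stratum $X_d\cap X_{d_1}\cap X_{d_2}$ contributes a genuinely new component rather than extra multiplicity absorbed into the third; this requires examining the normal cone of $\overline{\mathcal I}$ along the preimage of this stratum, and ruling out embedded components. Second, confirming completeness of the list by matching degrees: since $\deg X_s^*=d(d-1)^n$ for $s\neq 0$ and degree is locally constant in flat families, one needs
$$\deg X_{d_1}^*+\deg X_{d_2}^*+2\deg (X_{d_1}\cap X_{d_2})^*+\deg (X_d\cap X_{d_1}\cap X_{d_2})^*=d(d-1)^n,$$
which can be checked using standard Plücker-type formulas for duals of smooth complete intersections. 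Once both the component analysis and the degree bookkeeping match, the theorem follows.
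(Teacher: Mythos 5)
The paper itself does not prove this statement—it is quoted from \cite{Zhang}—so there is no in-paper argument to compare yours against; I can only judge the proposal on its own terms. Your overall architecture is a legitimate route: take the closure of the incidence (conormal) correspondence over $\Delta$, note that by properness its central fiber surjects onto the support of the limit divisor $X_0^*$, analyze local models over the strata $X_{d_i}\setminus X_{d_1}\cap X_{d_2}$, $X_{d_1}\cap X_{d_2}$, and $X_d\cap X_{d_1}\cap X_{d_2}$, and then use a degree count to force completeness. The degree identity you invoke does hold in the cases relevant to this paper; for instance, for a cubic threefold degenerating to quadric plus hyperplane one gets $2+0+2\cdot 2+18=24=3\cdot 2^3$.

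There are, however, genuine gaps as written. First, what must be computed is the multiplicity of each codimension-one component in the scheme-theoretic fiber $X_0^*$ of the closure of $\bigcup_{s\neq 0}X_s^*\times\{s\}$, i.e.\ the order of vanishing of the normalized discriminant of $F_s$ along each dual. Your ``two-to-one covering'' statement concerns the fibers of the central fiber of the incidence correspondence over $(X_{d_1}\cap X_{d_2})^*$, and that does not by itself give the divisorial multiplicity; the honest computation is to intersect with a general pencil $\mathbb L$ through a general point of $(X_{d_1}\cap X_{d_2})^*$ and count how many tangent hyperplanes of $X_s$ in $\mathbb L$ collide as $s\to 0$ (in your local model $uv=-s$ this count is indeed $2$, and the analogous count at the deepest stratum, using $uv+sw=0$, must be shown to give $1$). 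Relatedly, worrying about embedded components is beside the point: the limit is a divisor, so only codimension-one components and their multiplicities are at stake. Second, your completeness step rests on the identity $\deg X_{d_1}^*+\deg X_{d_2}^*+2\deg (X_{d_1}\cap X_{d_2})^*+\deg (X_d\cap X_{d_1}\cap X_{d_2})^*=d(d-1)^n$ for all $n$ and all splittings $d=d_1+d_2$, which you only assert; it needs a proof via the generalized Pl\"ucker (class) formula for the complete intersections involved, including the degenerate cases where a listed dual is not a hypersurface (e.g.\ $d_2=1$, which is precisely the case this paper uses, where that term contributes $0$). Finally, the logical structure should be made explicit: the local analysis yields lower bounds $1,1,2,1$ for the multiplicities, properness confines the support of $X_0^*$ to the union of the four duals, and only then does the exact degree identity force the multiplicities to equal $1,1,2,1$ and exclude any further components. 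With those three points supplied, the strategy would constitute a proof.
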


Now we have a variant of uniform Lefschetz pencil as in the proof of Proposition \ref{prop_localconst}

\begin{lemma}\label{RobustLemma}
Up to shrinking $\Delta$ to a smaller disk centered at $s=0$, we can choose a line $\mathbb L\subseteq (\mathbb P^{n+1})^*$, such that $\mathbb L$ is transverse to all $X_s^*$ for $s\in \Delta$. 
\end{lemma}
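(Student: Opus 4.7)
The plan is to exploit openness of transversality in families: first find a line $\mathbb L$ transverse to the reducible limit $X_0^*$, then observe that transversality to $X_s^*$ persists for $s$ in a neighborhood of $0\in \Delta$, and shrink $\Delta$ accordingly.

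First I would produce $\mathbb L$ at $s=0$. By the theorem from \cite{Zhang}, the reduction of $X_0^*$ has four irreducible components $V_1, V_2, V_3, V_4$ inside $(\mathbb P^{n+1})^*$. By a Bertini- or Kleiman-transversality-style argument applied to the action of $\mathrm{PGL}_{n+2}$ on $(\mathbb P^{n+1})^*$, the set of lines $\mathbb L\in \Gr(2,n+2)$ meeting each $V_i$ transversally at smooth points of $V_i$, with no point of $\mathbb L$ lying on two distinct $V_i$, is a nonempty Zariski open subset. Fix any such $\mathbb L$.

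Next I would propagate the condition to nearby $s$. Taking the closure of $\{X_s^*\}_{s\in \Delta^*}$ inside $(\mathbb P^{n+1})^*\times \Delta$ gives a flat family $\mathcal X^*\to \Delta$ with scheme-theoretic fiber $X_s^*$, and the slice $\mathcal Z := \mathcal X^* \cap (\mathbb L \times \Delta)$ is finite and flat over $\Delta$ of degree $d^*=d(d-1)^n$. The two conditions defining transversality of $\mathbb L$ to the reduction of $X_s^*$ — namely, that each point of $\mathcal Z_s$ be a smooth point of $(X_s^*)_{\textup{red}}$, and that its tangent space there meet $T\mathbb L$ only at the origin — are both Zariski open on the base. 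They hold at $s=0$ by construction, so after shrinking $\Delta$ they hold for every $s\in \Delta$.

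The main obstacle is scheme-theoretic: the component $(X_{d_1}\cap X_{d_2})^*$ appears with multiplicity $2$ in $X_0^*$, so $\mathcal Z_0$ is nonreduced at each point of $\mathbb L \cap (X_{d_1}\cap X_{d_2})^*$. One must therefore interpret ``$\mathbb L$ transverse to $X_s^*$'' as transversality to the underlying reduced structure, and check that for small $s\neq 0$ each length-$2$ point of $\mathcal Z_0$ deforms to two distinct simple points of $\mathcal Z_s$, each a transverse meeting with the smooth locus of the irreducible $X_s^*$. Generically in the choice of $F_d, F_{d_1}, F_{d_2}$ and of $\mathbb L$ there is no higher-order tangency, so the colliding roots separate; verifying this amounts to a short local unfolding calculation near each such point.
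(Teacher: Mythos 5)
Your overall strategy is the same as the paper's: arrange transversality to the limit dual variety $X_0^*$ (in its reduced sense, component by component) and then propagate to nearby $s$ by openness, shrinking $\Delta$. The paper's own proof is just the two-line continuity statement that one may choose $\mathbb L$ away from $\textup{Sing}(X_0^*)$ and that $\textup{Sing}(X_s^*)$ stays inside a small neighborhood of $\textup{Sing}(X_0^*)$ for small $s$; your version repackages the propagation through the finite flat curve $\mathcal Z=\mathcal X^*\cap(\mathbb L\times\Delta)\to\Delta$ and reducedness of its fibers, which has the merit of making explicit the one genuinely delicate point, namely the multiplicity-two component $(X_{d_1}\cap X_{d_2})^*$. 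Note that this is also the point the paper's continuity assertion silently uses: the naive semicontinuity argument (limits of singular points of $X_s^*$ are singular points of the limit) only places $\lim\textup{Sing}(X_s^*)$ inside the singular locus of the non-reduced limit, which contains the entire doubled component, so one must rule out singular points or tangencies of $X_s^*$ accumulating at the points of $\mathbb L\cap(X_{d_1}\cap X_{d_2})^*$. In your write-up this is exactly the step you defer (``a short local unfolding calculation''), so to be complete you should carry it out: near a general point of $(X_{d_1}\cap X_{d_2})^*$ the degeneration is locally the normal crossing smoothing $\{xy=s\}$, whose local dual consists of two smooth sheets, disjoint for $s\neq 0$ away from $X_{d_1}^*\cup X_{d_2}^*$, converging analytically onto the doubled component; hence a line transverse to the reduced $(X_{d_1}\cap X_{d_2})^*$ at such a point meets $X_s^*$ in two distinct simple, transverse points for small $s\neq 0$. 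With that local model in hand, no extra genericity of $F_d,F_{d_1},F_{d_2}$ should be invoked (they are fixed at the outset); genericity of $\mathbb L$ alone, i.e.\ transversality to each component of the reduced $X_0^*$ avoiding pairwise intersections, suffices, and the rest of your openness argument goes through.
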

\begin{proof}
This argument is based on continuity. First, we choose $\mathbb L$ to be transverse to $X^*_0$, i.e., $\mathbb L$ is disjoint from the singular locus of $X^*_0$. Then, since $X_0^*$ is the limit of $X_s^*$, we can take a small analytic neighborhood $\mathcal{U}$ of $\textup{Sing}(X_0^*)$ in $(\mathbb P^{n+1})^*$ such that $\mathcal{U}$ contains $\textup{Sing}(X_s^*)$ for all $s\in\Delta$ up to restricting to a smaller disk. Therefore, $\mathbb L$ intersect $X_s^*$ along the smooth locus $(X_s^*)^{sm}$ for each $s\in \Delta$.
\end{proof}

\begin{corollary}\label{cor_uniformLefschetz}
There exists an \textup{analytic} open subset $U\subseteq \mathbb L$ obtained by removing finitely many closed disks from $\mathbb L$, such that
\begin{itemize}
    \item[(i)] for all $s\in \Delta^*$ and $t\in U$, $H_t\cap X_s$ is smooth, and
    \item[(ii)] for each $t\in U$, $H_t\cap X_{d_i}$ and $H_t\cap X_{d_1}\cap X_{d_2}$ are smooth.
\end{itemize}
\end{corollary}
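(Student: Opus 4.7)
The plan is to combine the transversality from Lemma \ref{RobustLemma} with a compactness argument over a closed base disk. First, I would shrink $\Delta$ so that Lemma \ref{RobustLemma} applies on the closed disk $\bar{\Delta}$, and fix the line $\mathbb L$ produced there. For every $s\in\bar{\Delta}$, transversality of $\mathbb L$ with $X_s^*$ forces $\mathbb L \cap X_s^*$ to be a finite set of reduced points, so the incidence set
$$
  \Sigma := \{(t,s)\in \mathbb L\times \bar{\Delta} : t\in X_s^*\}
$$
is a one-dimensional closed analytic subset of $\mathbb L\times \bar{\Delta}$, finite over $\bar{\Delta}$.

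Next, since $\mathbb L\cong \mathbb P^1$ and $\bar{\Delta}$ are both compact, $\Sigma$ is compact, and hence so is its projection $K:=\pi_{\mathbb L}(\Sigma)\subseteq \mathbb L$. I would cover $K$ by finitely many small open disks $D_1,\ldots,D_k$ in $\mathbb L$, choose slightly smaller closed disks $\bar D_i\subset D_i$ whose interiors still cover $K$, and set $U := \mathbb L \setminus (\bar D_1\cup\cdots\cup\bar D_k)$. By construction $U$ is open, is obtained from $\mathbb L$ by removing finitely many closed disks, and is disjoint from $\mathbb L \cap X_s^*$ for every $s\in \bar{\Delta}$.

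Then I would verify the two conditions. Condition (i) is immediate: for $s\in\Delta^*$ and $t\in U$, the point $t$ is not in $X_s^*$, so $H_t$ meets $X_s$ transversely and $H_t\cap X_s$ is smooth. For condition (ii), the structure theorem for $X_0^*$ recalled above identifies $X_{d_1}^*$, $X_{d_2}^*$, and $(X_{d_1}\cap X_{d_2})^*$ as components of $X_0^*$. Consequently $\mathbb L \cap X_{d_i}^*$ and $\mathbb L \cap (X_{d_1}\cap X_{d_2})^*$ lie inside $\mathbb L \cap X_0^*\subseteq K$, and so are already avoided by $U$. Interpreting $t\notin X_{d_i}^*$ as transversality of $H_t$ with $X_{d_i}$, and $t\notin (X_{d_1}\cap X_{d_2})^*$ as transversality with the smooth codimension-two complete intersection $X_{d_1}\cap X_{d_2}$ (which is smooth for general $F_{d_1},F_{d_2}$), gives the required smoothness assertions.

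The only genuine subtlety I anticipate is justifying that $K$ is compact, equivalently that the discriminant points $t_{i,s}$ cannot accumulate pathologically or escape as $s$ varies. Both possibilities are precluded by the uniform transversality of $\mathbb L$ to $X_s^*$ on the compact base $\bar{\Delta}$ supplied by Lemma \ref{RobustLemma}, together with the properness of the projection $\mathbb L\times\bar{\Delta}\to\bar{\Delta}$; this is precisely what makes shrinking $\Delta$ at the outset essential.
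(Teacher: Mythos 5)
Your argument is correct and follows essentially the same route as the paper's (implicit) proof: after Lemma \ref{RobustLemma}, continuity/compactness over the shrunk closed disk lets one delete finitely many closed disks from $\mathbb L$ containing $\mathbb L\cap X_s^*$ for every $s$, which gives (i), and (ii) follows because $X_{d_1}^*$, $X_{d_2}^*$ and $(X_{d_1}\cap X_{d_2})^*$ are components of the limit dual variety $X_0^*$. The only cosmetic difference is that the paper removes small disks centered at the finite set $\mathbb L\cap X_0^*$ (shrinking $\Delta$ so that the points $t_{i,s}$ stay inside them), whereas you remove a finite covering of the whole swept-out projection $K$; both satisfy the statement as written.
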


\subsection{A Family of Local Vanishing Cycles}

Recall that in the previous section, we associated the family of hypersurfaces with a family of dual varieties 
$$f:\bigcup_{s\in\Delta}X_s^*\to \Delta$$
with $X_0^*$ as the dual variety in the limit, which contains $X_{d_1}^*$ as an irreducible component (assuming $d_1\ge 2$). Also, we have uniform Lefschetz pencil $\mathbb L$ for all $s\in\Delta$. 

Choose a point $p\in \mathbb L\cap X_{d_1}^*$, so in particular, $p$ is a smooth point of $X_{d_1}^*$ and away from other components of $X_0^*$. By inverse function theorem, up to shrinking to a smaller disk, we can find $\mathcal{N}(s)\in \mathbb L$ varying differentiably with respect to $s\in \Delta$ such that $\mathcal{N}(s)\in \mathbb L\cap (X_s^*)^{sm}$ and $\mathcal{N}(0)=p$. In particular, $\mathcal{N}$ defines a $C^{\infty}$ section whose image lies in the smooth part $(X_s^*)^{sm}$ when $s\neq 0$, and additionally $\mathcal{N}(0)\in (X_{d_1}^*)^{sm}$.
\begin{figure}[ht]
    \centering
\begin{tikzcd}
\bigcup_{s\in \Delta}X_{s}^* \arrow[r,hook] \arrow{d}[swap]{f} & (\mathbb P^{n+1})^*\times \Delta \arrow[dl]\\ \Delta \arrow[bend right, right=30,swap]{u}{\mathcal{N}} \arrow[bend right=20,swap]{ur}{c}
\end{tikzcd}
\end{figure}

Note by the dual correspondence, $\mathcal{N}$ also gives the nodal locus, i.e., $\tilde{\mathcal{N}}(s)\in X_s$ is a point such that $H_{\mathcal{N}(s)}$ is tangent to $X_s$ at $\tilde{\mathcal{N}}(s)$. When $s=0$, the point $\tilde{\mathcal{N}}(0)$ lives in $X_{d_1}$. We can choose $\mathcal{N}$ so that $\tilde{\mathcal{N}}(0)$ is not on $X_d\cap X_{d_1}$.

\begin{proposition}\label{prop_c}
 Let $U$ be the analytic open subset of Lefschetz pencil in Corollary \ref{cor_uniformLefschetz}.  Choose a point $c\in U$ close to $\mathcal{N}(0)$. Then there is a family of vanishing cycles $\alpha_s$ on $X_{s}\cap H_{c}$ represented by vanishing spheres. When $s=0$, $\alpha_0$ lies in $X_{d_1}\cap H_c$. 
\end{proposition}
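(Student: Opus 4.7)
The plan is to construct $\alpha_s$ fiberwise via Picard--Lefschetz for each $s \in \Delta$, and then show these vanishing spheres glue into a continuous family across $s=0$.

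First, for each $s \in \Delta$ the point $\mathcal{N}(s) \in \mathbb L$ gives a hyperplane $H_{\mathcal{N}(s)}$ tangent to $X_s$ at $\tilde{\mathcal{N}}(s)$; for $s \neq 0$ this creates a single ordinary node on the hyperplane section. Joining $\mathcal{N}(s)$ to the fixed basepoint $c \in U$ by a short path in $\mathbb L$ and applying the standard Picard--Lefschetz construction produces a vanishing sphere $\alpha_s \subset X_s \cap H_c$ supported in an arbitrarily small neighborhood of $\tilde{\mathcal{N}}(s)$, as long as $c$ is close enough to $\mathcal{N}(s)$. Fixing $c$ close to $\mathcal{N}(0)$ handles all sufficiently small $s$ uniformly. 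At $s=0$, I would apply the same construction not to the reducible $X_0$ but to the smooth hypersurface $X_{d_1}$: since $\mathcal{N}(0) \in (X_0^*)^{\mathrm{sm}} \cap X_{d_1}^*$ corresponds to a simple tangency of $X_{d_1}$ at $\tilde{\mathcal{N}}(0)$, smoothness of $\mathcal{N}(0)$ on $X_0^*$ keeps it off the component $(X_{d_1}\cap X_{d_2})^*$ and hence keeps $\tilde{\mathcal{N}}(0)$ off $X_{d_1} \cap X_{d_2}$, and $X_{d_1} \cap H_c$ is smooth by condition (ii) of Corollary \ref{cor_uniformLefschetz}. This yields a vanishing sphere $\alpha_0 \subset X_{d_1} \cap H_c$ near $\tilde{\mathcal{N}}(0)$.

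The main step is then checking that these individually constructed spheres assemble into a continuous family across $s=0$. The key observation I would use is that near $\tilde{\mathcal{N}}(0)$ the factor $F_{d_2}$ is non-vanishing, so the equation $F_s = s F_d + F_{d_1} F_{d_2} = 0$ is locally analytically equivalent to $F_{d_1} + s (F_d/F_{d_2}) = 0$, a smooth deformation of the smooth germ $X_{d_1}$ at $\tilde{\mathcal{N}}(0)$. Consequently the total space $\bigcup_{s \in \Delta} X_s$ is smooth in an analytic neighborhood $V$ of $\tilde{\mathcal{N}}(0)$, and $\bigcup_{s} (X_s \cap H_c)$ is a smooth family over a small disk in $\Delta$ inside $V \cap H_c$. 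Shrinking $c$ towards $\mathcal{N}(0)$ so that every $\alpha_s$ lies in $V \cap H_c$, Ehresmann's fibration theorem applied to this local family produces a smooth trivialization sending $\alpha_0$ to $\alpha_s$ for all small $s$.

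The main obstacle is controlling the behavior of the vanishing sphere as the smooth $X_s$ degenerates into the reducible $X_0 = X_{d_1} \cup X_{d_2}$: if $\tilde{\mathcal{N}}(0)$ were on $X_{d_1} \cap X_{d_2}$, the total space $\bigcup_s X_s$ would be singular there and the limiting sphere would meet both components simultaneously, breaking the local smoothness argument. The hypotheses that $\tilde{\mathcal{N}}(0)$ avoids $X_d \cap X_{d_1}$ and (via smoothness of $\mathcal{N}(0)$ on $X_0^*$) also $X_{d_1} \cap X_{d_2}$, combined with the freedom to choose $c$ arbitrarily close to $\mathcal{N}(0)$, is exactly what makes the Ehresmann trivialization available and forces $\alpha_0$ to live inside $X_{d_1} \cap H_c$ rather than crossing into $X_{d_2}$.
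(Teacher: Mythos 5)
Your starting point coincides with the paper's: localize near $\tilde{\mathcal{N}}(0)$, where $F_{d_2}\neq 0$, so that the degeneration $sF_d+F_{d_1}F_{d_2}=0$ is locally a smooth deformation of the smooth germ of $X_{d_1}$, and construct the sphere at $s=0$ inside $X_{d_1}\cap H_c$. The gap is in your gluing step. First, a technical point: Ehresmann's theorem requires a proper submersion, and the local family $\bigcup_s\bigl(X_s\cap H_c\cap V\bigr)\to\Delta$ is not proper; this is repairable (flow the compact sphere along a lift of $\partial/\partial s$, or use a version with boundary as in Lemma \ref{EhresmannWithBoundary}), but not as stated. The substantive problem is your assertion that the trivialization ``sends $\alpha_0$ to $\alpha_s$.'' A trivialization of the fixed slice $t=c$ transports $\alpha_0$ to \emph{some} continuous family of spheres $\beta_s\subset X_s\cap H_c$, but it sees nothing of the critical point of the pencil coordinate $t$ on $X_s$ at $\tilde{\mathcal{N}}(s)$, so nothing in your argument identifies $\beta_s$ with the Picard--Lefschetz sphere $\alpha_s$ you built fiberwise, nor even shows that $\beta_s$ is a vanishing sphere at all. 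That identification is precisely the content of the proposition: what is used later is that the class transported from $s=0$ is still a vanishing cycle on $X_s\cap H_c$, so it cannot simply be asserted.

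What is missing is a parametric statement about the family of holomorphic Morse functions $t|_{X_s\cap V}$, each with a single nondegenerate critical point at $\tilde{\mathcal{N}}(s)$ and critical value $\mathcal{N}(s)$. This is how the paper proceeds: since $\partial F_s/\partial t\neq 0$ there, the implicit function theorem writes the local family as a graph $t=f_s(x_1,\ldots,x_n)$ with $f_s$ analytic in $s$, and a parametric holomorphic Morse lemma produces coordinates, depending analytically on $s$, in which the entire local family becomes $\{x_1'^2+\cdots+x_n'^2=t\}\times\Delta$ compatibly with the projection to $\Delta$. In this normal form the vanishing spheres over $t=c$ are the standard real spheres for all $s$ simultaneously, which gives the continuity across $s=0$ and places $\alpha_0$ in $X_{d_1}\cap H_c$ in one stroke. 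To close your argument, either invoke such a parametric Morse lemma, or trivialize the two-parameter local family $\{X_s\cap H_t\}$ near $(s,t)=(0,\mathcal{N}(0))$ compatibly with the discriminant $\{t=\mathcal{N}(s)\}$, so that the nodal degeneration data (and hence the vanishing sphere) is carried along with the fibers; trivializing only the $t=c$ slices does not suffice.
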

\begin{proof}
 
We take a small polydisk $D$ in $\mathbb P^{n+1}$ containing $\tilde{\mathcal{N}}(0)$ so $\tilde{\mathcal{N}}(s)\in D$ when $s$ is small. Also, we require $D$ to stay away from the base locus, then $D$ can thought of as living in the total space $\mathscr{X}=\{(x,s)\in \mathbb P^{n+1}\times \Delta|x\in X_s\}$. We can choose affine coordinate $x_1,...,x_{n},t$ where $t$ corresponds to the pencil $\mathbb L$. Recall that $F_s=sF_d+F_{d_1}F_{d_2}$ to be the homogeneous polynomial varying in $s$, so restriction of $F_s$ to a fixed $t$ is the equation of the hyperplane section $X_s\cap H_t$. For each $s\in \Delta$, we denote $\tilde{\mathcal{N}}(s)=(x_1^s,...,x_{n}^s,t^s)$ the nodal locus, i.e., the hyperplane section $X_s\cap H_{t^s}$ has an ordinary node at $\tilde{\mathcal{N}}(s)$. Since $\partial F_s/\partial t (\tilde{\mathcal{N}}(s))\neq 0$, the implicit function theorem implies that there is a smooth function $f_s(x_1,...,x_{n})$, such that 
$$F_s(x_1,...,x_{n},f_s(x_1,...,x_{n}))\equiv 0.$$

Moreover, $f_s$ is a holomorphic function in $x_1,...,x_{n}$ and is analytic with respect to the parameter $s$. There is a power series expansion 
$$f_s(x_1,...,x_{n})=Q_s(x_1-x_1^{s},...,x_{n}-x_{n}^s)+\textup{higher}\ \textup{powers},$$
where $Q_s$ is a nondegenerate quadric form. 

Now, by a parametric version of the holomorphic Morse lemma, there is an analytic change of coordinate $x_1',...,x_{n}'$ such that 
$$f_s(x_1',...,x_{n}')=x_1'^2+\cdots+x_{n}'^2.$$
Moreover, the change of coordinate depends analytically with respect to the parameter $s$.

Consequently, there is an analytic isomorphism
$$D\xrightarrow{\cong}\{x_1'^2+\cdots+x_{n}'^2=t\}\times \Delta$$
which preserves projection to $\Delta$.
\end{proof}

\section{Tube Class via Degeneration}

 In this section, we plan to prove Theorem \ref{MainThm} for hypersurfaces of degree at least four. Our idea is to consider degenerating a degree $d$ hypersurface $X$ into the union of a degree $d-1$ hypersurface $Y$ and a hyperplane $P$ meeting transversely. To give an example, let $X$ be a quartic 3-fold and $Y$ a cubic threefold. Let $F_X$, $F_Y$ and $F_P$ be general homogeneous polynomials of degree $4$, $3$ and $1$ respectively, and we consider the one-parameter family of quartics $\mathscr{X}\to \Delta$, where
\begin{equation}
   \mathscr{X}=\{sF_X+F_YF_P=0\}\subset \Delta\times \mathbb P^4, \label{quartic_family}
\end{equation}
and $\Delta$ a small disk centered at $0\in \mathbb C$. The special fiber is $Y\cup P$, and the general fiber $X_s$ is a smooth quartic 3-fold. 

According to Clemens \cite{Cle77,Cle82}, the topology of the family \eqref{quartic_family} is understood. In particular, \eqref{quartic_family} is dominated by a semistable family and there is a deformation retract of $X_s$ onto $Y\cup P$, which induces diffeomorphism of the complement $Y\setminus \mathcal{U}(Z)$ of a neighborhood of $Z=Y\cap P$ into a smooth submanifold $X_s'$ of $X_s$. Therefore, any 3-cycle in $Y\setminus \mathcal{U}(Z)$ can be deformed to the nearby quartics. 

So we want to find a tube class on $Y$ away from $Z$. To do this, we have to make sure both the primitive vanishing cycle $\delta$ and the tube class $\tau_g(\delta)$ are all disjoint from $Z$. By the diffeomorphism constructed above, such a 3-cycle flows to the nearby smooth quartics.

However, there are a few subtle questions: Why are the deformed classes on the nearby quartics still tube classes? Why are these classes nonzero? What about the singularities on the total space \eqref{quartic_family}? 

In the rest of this section, we will resolve these questions and facilitate the degeneration approach. In fact, it works more generally for hypersurfaces in arbitrary dimensions. We want to prove the following.

 \begin{theorem}\label{thm_degeneration}
 Suppose Conjecture \ref{Conj_Clemens} holds for smooth hypersurfaces of degree $d_0$ in $\mathbb P^{n+1}$, where $n$ is an odd number. Then it holds for smooth hypersurfaces of degree $d_0+1$ in $\mathbb P^{n+1}$.
   
 \end{theorem}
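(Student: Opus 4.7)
The plan is to implement the degeneration idea sketched above the statement. Let $X\subset\mathbb P^{n+1}$ be a smooth hypersurface of degree $d_0+1$. Since the moduli of such hypersurfaces is connected and the tube mapping image is locally constant under smooth deformation (Proposition \ref{prop_localconst}), it is enough to verify the conclusion for one smooth fibre in a suitable family. I would set up the degeneration $X_s=\{sF_{d_0+1}+F_{d_0}F_1=0\}\subset\mathbb P^{n+1}$ for generic homogeneous polynomials, whose special fibre is $X_0=Y\cup P$ with $Y$ a smooth degree-$d_0$ hypersurface and $P$ a hyperplane transverse to $Y$ along $Z:=Y\cap P$. Corollary \ref{cor_uniformLefschetz} supplies a uniform Lefschetz pencil $\mathbb L$ with analytic open $U\subseteq\mathbb L$, and Proposition \ref{prop_c} supplies a continuous family of primitive vanishing cycles $\alpha_s\in H_{n-1}(X_s\cap H_c,\Z)$ with $\alpha_0\subset Y\cap H_c$ disjoint from $Z$.

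Applying the inductive hypothesis to $Y$ and $\alpha_0$ yields a loop $g\in\pi_1(U,c)$ fixing $\alpha_0$ with $\tau_g(\alpha_0)\neq 0$ in $H_n(Y,\Z)_{\prim}$; by continuity of the family the same $g$ fixes every $\alpha_s$. Next I would arrange a representative chain $\gamma_0\subset Y\setminus \mathcal U(Z)$ of $\tau_g(\alpha_0)$, where $\mathcal U(Z)$ is a small tubular neighborhood. The obstruction lives in $H_n(Y,Y\setminus Z,\Z)\cong H_{n-2}(Z,\Z)$ via the Thom isomorphism for the codimension-one inclusion $Z\subset Y$. Since $Z$ is a smooth hypersurface in $P\cong\mathbb P^n$ and $n-2$ is odd (as $n$ is odd), Lefschetz gives $H_{n-2}(Z,\Z)\cong H_{n-2}(\mathbb P^n,\Z)=0$. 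Hence $H_n(Y\setminus Z,\Z)\twoheadrightarrow H_n(Y,\Z)$ and such a $\gamma_0$ exists.

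By Clemens' analysis of the degeneration \cite{Cle77,Cle82}, for small $|s|$ there is a diffeomorphism $\iota_s:Y\setminus \mathcal U(Z)\hookrightarrow X_s$ induced by the deformation retract. Setting $\tilde\gamma:=(\iota_s)_*[\gamma_0]\in H_n(X_s,\Z)=H_n(X_s,\Z)_{\prim}$ (the equality uses $n$ odd, so $H_n(\mathbb P^{n+1})=0$), I would argue $\tilde\gamma=\tau_g(\alpha_s)$ by matching the pencil-monodromy trace construction with the Clemens transport, exploiting that the trace is a continuous chain-level deformation of $\gamma_0$ that can be kept inside $\iota_s(Y\setminus \mathcal U(Z))$. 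To see $\tilde\gamma\neq 0$, pick $\beta\in H_n(Y,\Z)$ with $\tau_g(\alpha_0)\cdot\beta\neq 0$ (possible by nondegeneracy of the intersection pairing over $\mathbb Q$), represent it by a cycle $\beta_0\subset Y\setminus\mathcal U(Z)$ transverse to $\gamma_0$ via the same Gysin surjectivity, and compute: the intersection $\tilde\gamma\cdot(\iota_s)_*[\beta_0]$ in $X_s$ is entirely supported in $X_s'\cong Y\setminus\mathcal U(Z)$, so it equals $\gamma_0\cdot\beta_0=\tau_g(\alpha_0)\cdot\beta\neq 0$, forcing $\tilde\gamma\neq 0$. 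Lemma \ref{lemma} then upgrades this to cofiniteness of the tube mapping image for $X_s$, and Proposition \ref{prop_localconst} propagates the result to $X$.

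The main obstacle I anticipate is the chain-level identification $\tilde\gamma=\tau_g(\alpha_s)$: the Clemens transport and the pencil monodromy construction produce chains in $X_s$ by genuinely different procedures, and matching them requires verifying that the isotopy pushing the monodromy trace off $\mathcal U(Z)$ can be carried out simultaneously across the family using the uniform pencil $\mathbb L$, without the deformation leaving the image of $\iota_s$ for $s$ small. A secondary concern is ensuring compatibility between the Clemens diffeomorphism and the uniform pencil so that $\iota_s$ restricts sensibly to hyperplane sections, which one likely obtains by constructing the Clemens map in each hyperplane-section degeneration and assembling.
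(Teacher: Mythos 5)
Your overall strategy coincides with the paper's (same degeneration family, same uniform pencil from Corollary \ref{cor_uniformLefschetz}, same family of vanishing spheres from Proposition \ref{prop_c}, and Lemma \ref{lemma} plus Proposition \ref{prop_localconst} to conclude), and your intersection-number argument for nonvanishing of the transported class is a legitimate, more elementary alternative to the paper's limiting mixed Hodge structure argument (Proposition \ref{prop_nonzero}). However, there is a genuine gap exactly where you flag the ``main obstacle'': the identification $(\iota_s)_*[\gamma_0]=\tau_g(\alpha_s)$ is not an incidental bookkeeping step but the heart of the proof, and the way you construct $\gamma_0$ makes it inaccessible. You obtain $\gamma_0\subset Y\setminus\mathcal{U}(Z)$ purely homologically, from surjectivity of $H_n(Y\setminus Z,\Z)\to H_n(Y,\Z)$ (your observation $H_{n-2}(Z,\Z)=0$ for $n$ odd); such a $\gamma_0$ is merely some cycle in the class of $\tau_g(\alpha_0)$ and carries no tube structure, so after transport by the Clemens diffeomorphism you only know that some nonzero class in $H_n(X_s,\Z)$ specializes to $\tau_g(\alpha_0)$ --- not that this class lies in the image of the tube mapping over a primitive vanishing cycle on hyperplane sections of $X_s$, which is precisely what Conjecture \ref{Conj_Clemens} in degree $d_0+1$ demands.

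The paper closes this by working at the chain level of the tube itself: Lemma \ref{lemma_vanishingaffine} lets one support the vanishing cycle $\delta$ inside $Y_{t_0}\setminus\mathcal{V}(Z_{t_0})$; Lemma \ref{lemma_Step2} chooses loop representatives in $\pi_1(U,t_0)$ acting trivially on $H_{n-2}(Z_{t_0},\Z)$ --- note this is the \emph{middle} homology of the hyperplane section $Z_{t_0}=Z\cap H_{t_0}$ and is not zero, so your vanishing of $H_{n-2}(Z,\Z)$ does not dispose of this monodromy issue --- so that the entire trace of $\delta$ stays in $\bigcup_{t}(Y_t\setminus\mathcal{V}(Z_t))$ (Corollary \ref{cor_tubeOnOpen}); and Proposition \ref{prop_open2dimfamily} provides a two-parameter trivialization of $\{X_s'\cap H_t\}$ over $\Delta\times U$, which both guarantees that the same loop $g$ stabilizes $\delta_s$ for all $s$ (your appeal to ``continuity'' is delicate precisely at $s=0$, where $X_0\cap H_t$ is singular, so the naive two-parameter family of full hyperplane sections is not locally trivial) and makes the transported cycle literally the trace of the primitive vanishing cycle $\delta_s$ along $g$, hence a genuine tube class. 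Without these ingredients, or an equivalent substitute, your argument proves nonvanishing of a specializing class but not the inductive statement.
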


 The proof breaks up into several steps.

\subsection*{Step 1. Vanishing Cycles on the Hyperplane Complement.}
Let $Y\subseteq \mathbb P^{n+1}$ be a smooth hypersurface. Then one the vanishing homology of $N$ is kernel of the pushforward $H_{n}(Y,\Z)\to H_{n}(\mathbb P^{n+1})$. This agrees with \eqref{eqn_vanishinghomology} by a Veronese embedding, which sends $Y$ to a hyperplane section of the image of $\mathbb P^{n+1}$ in a larger projective space.

Let $H\subseteq \mathbb P^{n+1}$ as a general hyperplane, and let $Z=Y\cap H$. Then there is an exact sequence of the pair $(Y, Y-Z)$ 
\begin{equation}\label{eqn_seq}
    H_{n}(Y-Z,\Z)\xrightarrow{f} H_{n}(Y,\Z)\xrightarrow{g} H_{n-2}(Z,\Z).
\end{equation}

The map $g$ is the composite of $H_{n}(Y,\Z)\to H_{n}(Y,Z,\Z)\cong H_{n}(\mathcal{V}(Z),\mathcal{V}(Z)-Z,\Z)\cong H_{n-2}(Z,\Z)$, where the second map is excision, and the last map is Thom isomorphism. Homologically, $g$ sends a class $A\in H_{n}(Y,\Z)$ to the intersection $A\cap [H]$.

$\textup{Im}(f)$ consists of cycles on $Y$ that are homologous to cycles on the complement of a hyperplane section. Such a class is called \textit{finite cycle} in \cite[p.483]{Griffiths}. It turns out that vanishing cycles are precisely finite cycles.

\begin{lemma}(c.f. \cite[Prop. 7.3]{Griffiths})\label{lemma_vanishingaffine}
$H_{n}(Y,\Z)_{van}=\ker(g)=\textup{Im}(f)$.
\end{lemma}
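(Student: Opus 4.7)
The plan is to split the lemma into two equalities: $\ker(g) = \textup{Im}(f)$ and $\ker(g) = H_n(Y,\Z)_{\textup{van}}$. The first is immediate from exactness of the long exact homology sequence of the pair $(Y, Y - Z)$, once one justifies the identification $H_n(Y, Y - Z) \cong H_{n-2}(Z)$ implicit in the definition of $g$: this is excision onto a tubular neighborhood of $Z$ in $Y$ followed by the Thom isomorphism for the normal bundle $N_{Z/Y}$ (a complex line bundle, so of real rank $2$). Under this identification $g$ is the geometric intersection $A \mapsto A \cap [Z]_Y$.

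The core of the lemma is the equality $H_n(Y,\Z)_{\textup{van}} = \ker(g)$. I would set this up by comparing $g$ with intersection with $[H]$ in the ambient $\mathbb{P}^{n+1}$ via the commutative square
$$\begin{array}{ccc} H_n(Y,\Z) & \stackrel{g}{\longrightarrow} & H_{n-2}(Z,\Z) \\ {\scriptstyle i_*}\downarrow & & \downarrow {\scriptstyle j_*} \\ H_n(\mathbb{P}^{n+1},\Z) & \stackrel{\cap[H]}{\longrightarrow} & H_{n-2}(H,\Z) \end{array}$$
where $i: Y \hookrightarrow \mathbb{P}^{n+1}$, $j: Z \hookrightarrow H$, and the bottom arrow is intersection with $[H]$ in $\mathbb{P}^{n+1}$. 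Commutativity is the projection formula $j_*(A \cap [Z]_Y) = i_*(A) \cap [H]$, which holds since $Z = Y \cap H$.

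The punchline is that both the right vertical and the bottom horizontal arrows are isomorphisms, forcing $g(A) = 0 \iff i_*(A) = 0$, i.e.\ $\ker(g) = \ker(i_*) = H_n(Y,\Z)_{\textup{van}}$. For the right vertical, Lefschetz hyperplane applied to $Z \subset H \cong \mathbb{P}^n$ gives $j_*: H_{n-2}(Z) \xrightarrow{\cong} H_{n-2}(H)$, since $\dim_{\mathbb{C}} Z = n - 1$ and $n - 2 < n - 1$. For the bottom horizontal, a direct computation in projective space suffices: if $n$ is odd both sides vanish, and if $n$ is even the generator $[\mathbb{P}^{n/2}]$ of $H_n(\mathbb{P}^{n+1})$ meets $H$ transversely in a generator of $H_{n-2}(\mathbb{P}^n)$.

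The main (mild) obstacle is being careful with the commutativity and the Lefschetz input; no deeper tool is needed, and the cohomological translation via Poincaré duality (where $g$ becomes restriction $H^n(Y) \to H^n(Z)$ and $i_*$ becomes Gysin pushforward) is available as a sanity check.
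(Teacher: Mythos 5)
Your proposal is correct, but it takes a genuinely different route from the paper. The paper dualizes: by Poincar\'e--Lefschetz duality the sequence \eqref{eqn_seq} becomes $H^n(Y,Z,\Z)\to H^n(Y,\Z)\xrightarrow{r}H^n(Z,\Z)$, and one identifies $\ker(r)$ with the primitive cohomology of $Y$, which coincides with the vanishing cohomology because $Y$ is a hypersurface; dualizing back gives $\ker(g)=H_n(Y,\Z)_{\textup{van}}$. You instead stay in homology and compare $g$ with intersection-with-$[H]$ on the ambient $\mathbb P^{n+1}$ via the square $j_*\circ g=(\cap[H])\circ i_*$, then observe that $j_*:H_{n-2}(Z,\Z)\to H_{n-2}(H,\Z)$ is an isomorphism by the Lefschetz hyperplane theorem for $Z\subset H\cong\mathbb P^n$ (degree $n-2$ is below the middle dimension $n-1$ of $Z$) and that $\cap[H]:H_n(\mathbb P^{n+1},\Z)\to H_{n-2}(\mathbb P^n,\Z)$ is an isomorphism by direct computation, whence $\ker(g)=\ker(i_*)$; the equality $\ker(g)=\textup{Im}(f)$ is exactness of the pair sequence in both treatments. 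What each buys: the paper's argument is shorter if one grants the standard fact that the kernel of restriction to a smooth hyperplane section is the primitive (= vanishing) cohomology, and it matches the cohomological language used elsewhere (e.g.\ in the proof of Lemma \ref{lemma}); your argument avoids that input (and any appeal to hard Lefschetz or the primitive decomposition), works transparently with $\Z$-coefficients since both auxiliary maps are integral isomorphisms, and only requires weak Lefschetz for $Z$ plus the compatibility of the refined Gysin/intersection maps with proper pushforward for the transverse square $Z=Y\cap H$ --- the one step you rightly flag as needing care, and which is standard given that $H$ is general.
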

\begin{proof}
We give a proof using cohomology. By Poincare-Lefschetz duality, the sequence \eqref{eqn_seq} is isomorphic to 
$$H^{n}(Y,Z,\Z)\to H^{n}(Y,\Z)\xrightarrow{r} H^{n}(Z,\Z).$$

The kernel of $r$ given by restriction is exactly the primitive cohomology $H^{n}(Y,\Z)_{prim}$, which coincides with the vanishing cohomology since $N$ is a hypersurface. (Note we also used this fact in the proof of Lemma \ref{lemma}). so its Poincare dual, $\ker (g)$ is the vanishing homology.
\end{proof}

\subsection*{Step 2. Tube Cycles on the Hyperplane Complement.}
Recall that in Corollary \ref{cor_uniformLefschetz}, we have an analytic open set $U$ is obtained by removing finitely many small closed disks from $\mathbb L$  centered at $\mathbb L\cap X_0^*$. Choose a base point $t_0\in U$ (in particular, we may choose $t_0=c$ in Proposition \ref{prop_c}).

Let $\mathcal{V}(Z)$ be a tubular neighborhood of $Z=Y\cap P$ in $Y$. By restricting to a hyperplane section, $\mathcal{V}(Z_t)=\mathcal{V}(Z)\cap H_t$ is a tubular neighborhood of $Z_t$ in $Y_t$.

\begin{proposition}
    The family of manifolds $\{Y_t-\mathcal{V}(Z_t)\}_{t\in U}$ is topologically locally trivial.
\end{proposition}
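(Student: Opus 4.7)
The plan is to deduce local triviality of this family of complements from Ehresmann's theorem applied to the pair formed by the total spaces of $\{Y_t\}$ and $\{Z_t\}$, and then to use uniqueness of tubular neighborhoods to make the resulting trivialization compatible with the specific choice of $\mathcal V(Z)$.

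First, I would assemble the family globally as
\[
\mathcal Y:=\{(y,t)\in Y\times U : y\in H_t\}\quad\text{and}\quad
\mathcal Z:=\{(z,t)\in Z\times U : z\in H_t\},
\]
with the obvious projections $\pi\colon \mathcal Y\to U$ and $\pi|_{\mathcal Z}\colon \mathcal Z\to U$. Corollary~\ref{cor_uniformLefschetz} tells us that for every $t\in U$ the hyperplane $H_t$ cuts $Y$ transversely and cuts $Z=Y\cap P$ transversely, so $\mathcal Y$ and $\mathcal Z$ are smooth and both projections are submersions; they are proper because $Y$ and $Z$ are projective. This puts us in position to apply a parametrized form of Ehresmann's theorem.

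Next, I would apply Ehresmann's fibration theorem in its pair form (or, equivalently, Thom's first isotopy lemma for the Whitney stratification of $\mathcal Y$ by $\mathcal Z$ and its complement) to obtain, for each $t_0\in U$, an open neighborhood $V\ni t_0$ and a diffeomorphism over $V$
\[
\Phi\colon \pi^{-1}(V)\xrightarrow{\ \cong\ } Y_{t_0}\times V
\]
whose restriction sends $\mathcal Z\cap \pi^{-1}(V)$ diffeomorphically onto $Z_{t_0}\times V$. This already trivializes the pair $(Y_t,Z_t)$ locally in $U$; the remaining issue is to promote this to a trivialization of $(Y_t,\mathcal V(Z_t))$.

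To close the gap, I would invoke the tubular neighborhood theorem with parameters: both $\mathcal V(Z)\cap \pi^{-1}(V)$ and $\Phi^{-1}(\mathcal V(Z_{t_0})\times V)$ are tubular neighborhoods of $\mathcal Z\cap \pi^{-1}(V)$ inside $\pi^{-1}(V)$, so they differ by a fiber-preserving ambient isotopy over $V$ that is the identity on $\mathcal Z$. Composing that isotopy with $\Phi$ produces a local trivialization carrying $\mathcal V(Z_t)$ onto $\mathcal V(Z_{t_0})\times V$, and restricting to the complement gives the asserted local triviality of $\{Y_t-\mathcal V(Z_t)\}_{t\in U}$. The main, though standard, obstacle I anticipate is exactly this last step: Ehresmann alone only trivializes the pair $(\mathcal Y,\mathcal Z)$, and one needs the parametrized uniqueness of tubular neighborhoods to match the pre-specified $\mathcal V(Z)$ cut out from $Y$.
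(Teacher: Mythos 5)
Your argument is correct in outline, but it closes the proof by a different route than the paper. You and the paper share the first step: both pass to the total spaces $\mathcal Y\supseteq\mathcal Z$ over $U$, note that by Corollary \ref{cor_uniformLefschetz} and properness the projections are proper submersions, and invoke a relative Ehresmann statement (your ``pair form'' is exactly Lemma \ref{appendix}). The difference is in how the tube is handled. The paper never re-trivializes the tube: it removes $\bigcup_{t\in U}\mathcal V(Z_t)$ from $\mathcal Y_U$, observes that the resulting total space $M$ is a manifold whose boundary fibers are the circle bundles $\partial\overline{\mathcal V(Z_t)}\to Z_t$, and applies the boundary version of Ehresmann (Lemma \ref{EhresmannWithBoundary}, itself deduced from Lemma \ref{appendix} by doubling) directly to $M\to U$, so the pre-specified complements are trivialized as they stand. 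You instead trivialize the pair $(\mathcal Y,\mathcal Z)$ first and then match the given family $\mathcal V(Z_t)$ with the product tube $\mathcal V(Z_{t_0})\times V$ by a fiber-preserving parametrized uniqueness of tubular neighborhoods (an isotopy-extension argument with parameters, in the spirit of the proof of Lemma \ref{appendix}). That step is standard but is genuinely an extra input, roughly of the same strength as the boundary Ehresmann lemma it replaces, so your route trades ``manifolds with boundary'' for ``parametrized tubular-neighborhood uniqueness.'' Both arguments rest on the same implicit geometric fact, asserted in the paper as well: $\mathcal V(Z)$ must be chosen so that $\mathcal V(Z_t)=\mathcal V(Z)\cap H_t$ is fiberwise a disk bundle over $Z_t$ with smooth boundary transverse to the pencil (equivalently, $\bigcup_t\mathcal V(Z_t)$ is a smooth fiberwise tube around $\mathcal Z$); if you spell that hypothesis out, your proof is complete.
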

\begin{proof}
  Let $\mathcal{Y}\to \mathbb L$ the total space of the pencil whose fiber is the hyperplane section $Y_t$. It contains $\mathcal{Z}$ whose fiber is $Z_t$. Let $\pi:\mathcal{Y}_U\to U$ restriction of the fibration over $U$. Then $\pi$ is proper submersion and is locally trivial by the classical Ehresmann's theorem. 

On the other hand, $Z_t$ is smooth for $t\in U$ (c.f. Corollary \ref{cor_uniformLefschetz}), and $\mathcal{V}(Z_t)$ is a disk bundle over $Z_t$, so the boundary $\partial\overline{\mathcal{V}(Z_t)}$ is a circle bundle over $Z_t$. Now we remove $\cup_{t\in U}\mathcal{V}(Z_t)$ from $\mathcal{Y}_U$, and denote the resulting manifold as $M$. Then the restriction $\pi|_{\partial M}:\partial M\to U$ is submersion, and the fiber over $t$ is just the circle bundle over $Z_t$. By Lemma \ref{EhresmannWithBoundary}, $M\to U$ is locally trivial.
\end{proof}

Consequently, it makes sense to talk about the monodromy on the homology of $Y_t-Z_t$ over the base $U$.

However, the open manifold carries more homology: There is an exact sequence 
$$H_{n-2}(Z_{t_0},\mathbb Z)\to H_{n-1}(Y_{t_0}-Z_{t_0},\mathbb Z)\to H_{n-1}(Y_{t_0},\mathbb Z)\to 0.$$

The monodromy of $H_{n-1}(Y_{t_0},\Z)$ is a quotient of the monodromy group of $H_{n-1}(Y_{t_0}-Z_{t_0},\Z)$. The latter one also deposits on the monodromy on $Z_{t_0}$ — This happens when the loop goes around a point $w$ in the pencil where $Z_w$ has an ordinary node. (e.g., when $n=3$ and $d=3$, $Z_{t_0}$ is a cubic curve, and $Y_{t_0}$ is a cubic surface) So we need to carefully choose the representatives of $\pi_1(U,t_0)$ to avoid the monodromy on $H_{n-2}(Z_{t_0},\Z)$.

\begin{figure}[h]
\includegraphics[width=8cm]{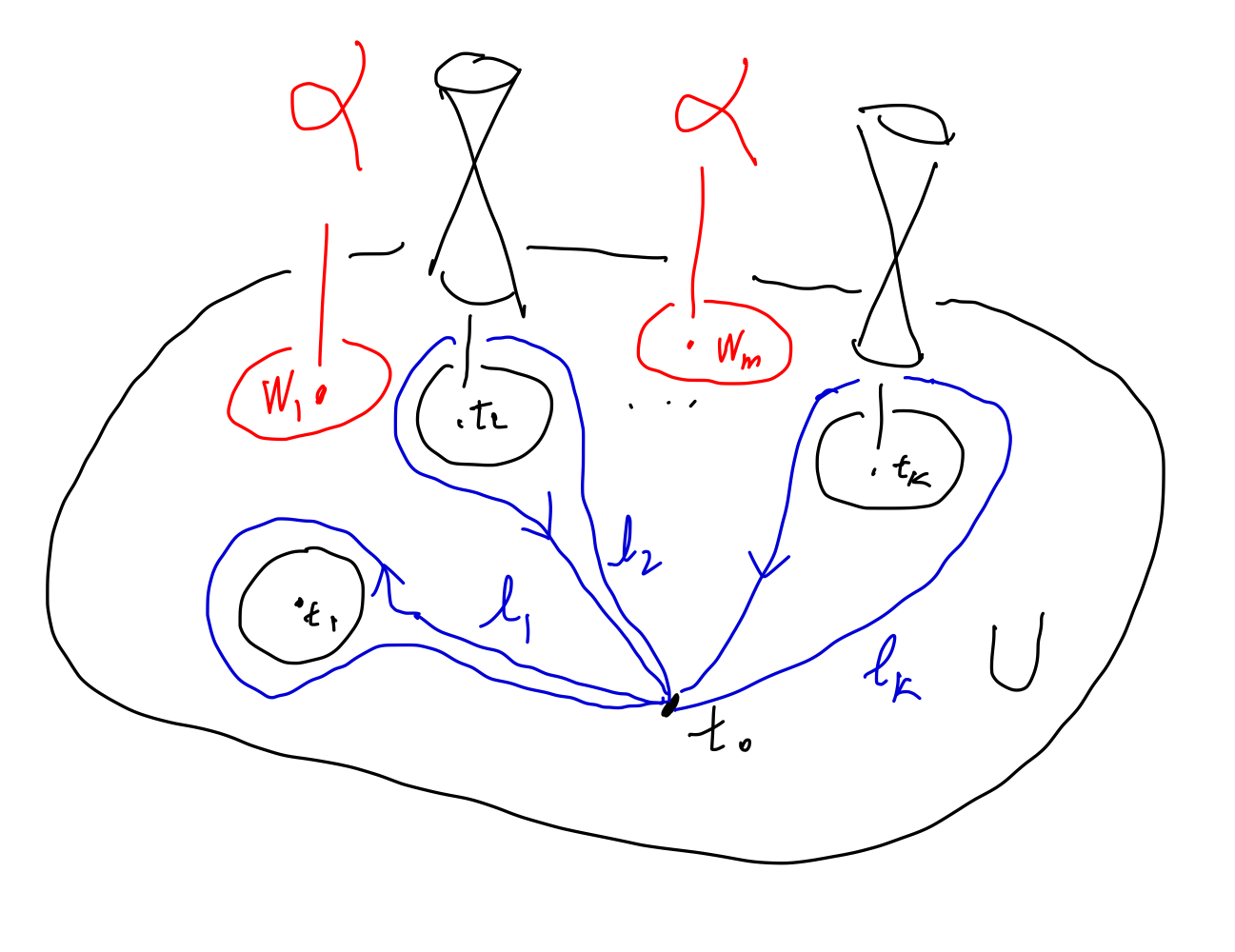}
\caption{Uniform Lefschetz Pencil}
\end{figure}

By construction in Corollary \ref{cor_uniformLefschetz}, the Lefschetz pencil $\mathbb L$ intersects both $Y^*$ and $Z^*$ transversely at distinct points. Denote them as $t_1,\ldots, t_k$ and $w_1,\ldots, w_m$. Denote $U_Y=\mathbb L\setminus \{t_1,\ldots, t_k\}$. Then the inclusion $U\hookrightarrow U_Y$ induces $\pi_1(U,t_0)\twoheadrightarrow \pi_1(U_Y,t_0)$. 
\begin{lemma}\label{lemma_Step2}
We choose the loop $l_i$ obtained from joining $t_0$ to $t_i$ going around and comes back to $t_0$, for $i=1,\ldots, k$, and does not include $w_j$. Then 
\begin{itemize}
    \item[(1)] the subgroup $G$ of $\pi_1(U,t_0)$ generated by the loops $l_1,\ldots, l_k$ is isomorphic to $\pi_1(U_Y,t_0)$ via the natural map, and
    \item[(2)] the subgroup induces trivial monodromy on $H_{n-2}(Z_{t_0},\Z)$.
\end{itemize}
\end{lemma}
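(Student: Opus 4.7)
The plan is to handle the two assertions separately, using the explicit topology of the pencil $\mathbb L\cong\mathbb P^1$ for part (1) and Ehresmann's theorem applied to the family $\mathcal{Z}\to\mathbb L$ of hyperplane sections of $Z$ for part (2). Up to homotopy, $U$ is $\mathbb P^1$ with the $k+m$ punctures $\{t_1,\ldots,t_k,w_1,\ldots,w_m\}$ removed, so $\pi_1(U,t_0)$ has the usual free presentation with one generator per puncture and a single relation that the cyclic product of all standard generators is trivial. The natural surjection $\pi_1(U,t_0)\twoheadrightarrow\pi_1(U_Y,t_0)$ kills the standard loops around the $w_j$ and sends each standard loop around $t_i$ to the corresponding generator of $\pi_1(U_Y,t_0)$.

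For part (1), I would use that each $l_i$ was chosen as a simple loop around $t_i$ whose bounded region contains no $w_j$: its class in $\pi_1(U_Y,t_0)$ under the above surjection is therefore precisely the standard generator around $t_i$. Consequently the restriction of the surjection to $G$ carries the generators $l_1,\ldots,l_k$ of $G$ onto the generators of $\pi_1(U_Y,t_0)$, giving the identification of $G$ with $\pi_1(U_Y,t_0)$ via the natural map. The only combinatorial point to verify is that the arcs joining $t_0$ to each small circle around $t_i$ can be chosen inside $\mathbb L\setminus\{w_1,\ldots,w_m\}$, which is immediate since this punctured sphere is path-connected.

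For part (2), each $l_i$ is supported in a closed disk $\bar D_i\subset\mathbb L$ around $t_i$ containing no $w_j$. Over $\bar D_i$ the restricted family $\mathcal{Z}|_{\bar D_i}\to\bar D_i$ is a proper submersion of smooth manifolds, since by Corollary \ref{cor_uniformLefschetz} the section $Z_t$ is smooth for every $t\in\mathbb L\setminus\{w_1,\ldots,w_m\}$ and in particular for every $t\in\bar D_i$. Ehresmann's theorem then trivializes the restricted family topologically over $\bar D_i$, so $l_i$ induces the identity on $H_{n-2}(Z_{t_0},\Z)$; since $G$ is generated by the $l_i$, the action of $G$ on $H_{n-2}(Z_{t_0},\Z)$ is trivial.

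The main obstacle I expect is the bookkeeping in step (1): one must match the chosen topological loops $l_i$ carefully with the standard presentation of $\pi_1(U_Y,t_0)$, since \emph{a priori} the $l_i$ sit inside a free group on $k+m-1$ generators and their natural identification with $\pi_1(U_Y,t_0)$ requires that the enclosure condition on the $w_j$ precisely cuts the presentation of $\pi_1(U,t_0)$ down to that of $\pi_1(U_Y,t_0)$. Once this van Kampen--style check is performed, the Ehresmann step in part (2) is routine.
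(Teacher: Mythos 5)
Your part (2) is essentially correct. Since Corollary \ref{cor_uniformLefschetz} arranges that $\mathbb L$ meets $Y^*$ and $Z^*$ transversely at \emph{distinct} points, each $t_i$ is not a $w_j$, so the loop $l_i$ (tail included) can be enclosed in a simply connected region of $\mathbb L$ containing $t_i$ but no $w_j$; over that region the family of the $Z_t$ is a proper submersion, hence trivial by Ehresmann, and $l_i$ acts trivially on $H_{n-2}(Z_{t_0},\Z)$. Equivalently, $l_i$ is null-homotopic in $\mathbb L\setminus\{w_1,\ldots,w_m\}$, over which the $Z$-family is a fiber bundle. One small correction to your setup: $U$ is obtained by deleting disks around \emph{all} of $\mathbb L\cap X_0^*$, which may include points besides the $t_i$ and $w_j$, so $\pi_1(U,t_0)$ can be free of rank larger than $k+m-1$; this does not affect the argument.

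The genuine gap is in part (1). Sending generators to generators proves only that the natural map $G\to\pi_1(U_Y,t_0)$ is \emph{surjective}; it gives no injectivity, and the ``bookkeeping'' you defer cannot be completed, because the map restricted to $G$ is in fact never injective here. With $m\ge 1$ the classes of $l_1,\ldots,l_k$ are linearly independent in $H_1(U,\Z)$, so $G$ is free of rank exactly $k$, whereas $\pi_1(U_Y,t_0)$ is free of rank $k-1$; a surjection from a free group of rank $k$ onto a free group of rank $k-1$ cannot be injective, since otherwise it would be an isomorphism between free groups of different ranks. Concretely, the product of the $l_i$ taken in the cyclic order of the punctures maps to the defining relation of $\pi_1(U_Y,t_0)$, hence to the identity, but is nontrivial in $\pi_1(U,t_0)$, being equal there to a nontrivial product of meridians around the $w_j$ and the other deleted points. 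So statement (1) should be understood (or restated) as surjectivity of $G\to\pi_1(U_Y,t_0)$ rather than an isomorphism; your argument does establish that surjectivity, and surjectivity together with (2) is exactly what Corollary \ref{cor_tubeOnOpen} needs, namely that every monodromy element of $\pi_1(U_Y,t_0)$ stabilizing $\delta$ admits a lift in $\pi_1(U,t_0)$ acting trivially on $H_{n-2}(Z_{t_0},\Z)$.
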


Consequently, we can perform monodromy safely on the open manifold.
\begin{corollary}\label{cor_tubeOnOpen}
Let $\delta\in H_{n-1}(Y_{t_0},\mathbb Z)_{van}$ and $g\in \pi_1(U,t_0)$ and stabilizes $\delta$ via monodromy. Then the tube class $\tau_g(\delta)$ is represented by a $n$-cycle contained in $Y\setminus \mathcal{V}(Z)$. 

\end{corollary}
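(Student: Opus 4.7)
The plan is to realize the vanishing class $\delta$ by a cycle disjoint from $Z_{t_0}$, parallel-transport it along a loop representing $g$ inside the locally trivial family $\{Y_t\setminus\mathcal{V}(Z_t)\}_{t\in U}$, and cap off the resulting chain by a bounding chain that also avoids $\mathcal{V}(Z)$. Without loss of generality, using Lemma~\ref{lemma_Step2}(1) we may replace $g$ by a representative in the subgroup $G$ generated by $l_1,\ldots,l_k$, since the monodromy on $H_{n-1}(Y_{t_0},\Z)$ factors through $\pi_1(U_Y,t_0)$ and so the stabilizer condition is unaffected.

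First, regard $Y_{t_0}$ as a smooth hypersurface in $H_{t_0}\cong\mathbb P^n$ with hyperplane section $Z_{t_0}=Y_{t_0}\cap P$. Lemma~\ref{lemma_vanishingaffine} applied to this data identifies $H_{n-1}(Y_{t_0},\Z)_{\van}$ with the image of $H_{n-1}(Y_{t_0}-Z_{t_0},\Z)\to H_{n-1}(Y_{t_0},\Z)$, so $\delta$ admits a cycle representative $\tilde\delta$ supported in $Y_{t_0}\setminus\mathcal{V}(Z_{t_0})$. The preceding proposition states that $\{Y_t\setminus\mathcal{V}(Z_t)\}_{t\in U}$ is a locally trivial family, so parallel transport of $\tilde\delta$ along a representative of $g$ sweeps out an $n$-chain $T\subset Y\setminus\mathcal{V}(Z)$ with $\partial T=g_*\tilde\delta-\tilde\delta$ supported in the fiber $Y_{t_0}\setminus\mathcal{V}(Z_{t_0})$.

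It remains to show that $g_*\tilde\delta-\tilde\delta$ bounds in $Y_{t_0}\setminus\mathcal{V}(Z_{t_0})$; then capping off $T$ by such a bounding chain produces a cycle in $Y\setminus\mathcal{V}(Z)$ representing $\tau_g(\delta)$. The long exact sequence of the pair $(Y_{t_0},Y_{t_0}-Z_{t_0})$ combined with the Thom isomorphism gives
\[
H_{n-2}(Z_{t_0},\Z)\xrightarrow{\,j\,}H_{n-1}(Y_{t_0}-Z_{t_0},\Z)\xrightarrow{\,\pi\,}H_{n-1}(Y_{t_0},\Z).
\]
Because $g$ fixes $\delta=\pi(\tilde\delta)$, the class $[g_*\tilde\delta-\tilde\delta]$ lies in $\mathrm{Im}(j)$. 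By Lemma~\ref{lemma_Step2}(2), $g\in G$ acts trivially on $H_{n-2}(Z_{t_0},\Z)$, hence trivially on $\mathrm{Im}(j)$; in particular $(g_*-1)^2\tilde\delta=0$. A Picard--Lefschetz analysis along the generators $l_1,\ldots,l_k$ completes the argument: each $l_i$ encircles a single node of the pencil $\{Y_t\}$ with vanishing cycle $\delta_i\in H_{n-1}(Y_{t_0},\Z)_{\van}$, which itself lifts to $\tilde\delta_i\in H_{n-1}(Y_{t_0}-Z_{t_0},\Z)$ by Lemma~\ref{lemma_vanishingaffine}. The monodromy on the complement is then a lifted reflection compatible with the downstairs Picard--Lefschetz formula; combined with the triviality on $\mathrm{Im}(j)$, this forces the word $g$ to fix $\tilde\delta$ outright.

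The main obstacle is the compatibility check at the end of Step~3: the lift of Picard--Lefschetz from $H_{n-1}(Y_{t_0})$ to $H_{n-1}(Y_{t_0}-Z_{t_0})$ must be shown not to introduce any residual ``tube-around-$Z_{t_0}$'' correction. The role of choosing $g\in G$---and in particular avoiding loops around the branch points $w_j\in\mathbb L\cap Z^*$---is precisely to kill such corrections, which is the content of Lemma~\ref{lemma_Step2}(2) and explains why this restricted subgroup is exactly what one needs for the tube cycle to be realized in the hyperplane complement.
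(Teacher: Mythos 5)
Your proposal follows the paper's own proof in all essentials: represent $\delta$ inside the complement via Lemma~\ref{lemma_vanishingaffine}, transport it along $g$ using the local triviality of the family $\{Y_t\setminus\mathcal{V}(Z_t)\}_{t\in U}$, and use the restricted generators $l_1,\ldots,l_k$ of Lemma~\ref{lemma_Step2} to control the behaviour near $Z$. You are in fact more explicit than the paper about the one delicate point, namely that stabilization of $\delta$ in $H_{n-1}(Y_{t_0},\Z)$ must be upgraded to stabilization of a lift $\tilde\delta$ in $H_{n-1}(Y_{t_0}-Z_{t_0},\Z)$ so that the trace can be capped off inside $Y\setminus\mathcal{V}(Z)$.

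Your resolution of that point, however, is asserted rather than proved. The lifted Picard--Lefschetz formula gives $(g_*-1)\tilde\delta=\sum_j c_j\tilde\delta_{i_j}$ with coefficients computed from downstairs intersection numbers, so the hypothesis $g_*\delta=\delta$ only yields $\sum_j c_j\delta_{i_j}=0$ in $H_{n-1}(Y_{t_0},\Z)$. Triviality of the action on $\mathrm{Im}(j)$ (equivalently your observation $(g_*-1)^2\tilde\delta=0$) does not exclude that $\sum_j c_j\tilde\delta_{i_j}$ is a \emph{nonzero} element of $\mathrm{Im}(j)$: the assignment $g\mapsto(g_*-1)\tilde\delta$ is a homomorphism from the stabilizer of $\delta$ to $\mathrm{Im}(j)$ which is unchanged if $\tilde\delta$ is replaced by another lift, so nothing you write forces it to vanish. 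Closing this requires an additional input --- for instance that the subgroup of $H_{n-1}(Y_{t_0}-Z_{t_0},\Z)$ generated by the lifted vanishing cycles $\tilde\delta_1,\ldots,\tilde\delta_k$ meets $\mathrm{Im}(j)$ trivially (equivalently maps injectively to $H_{n-1}(Y_{t_0},\Z)$), or a chain-level construction of the capping chain inside $Y_{t_0}\setminus\mathcal{V}(Z_{t_0})$. To be fair, the paper's proof passes over exactly this implication in one sentence (``$g=l_{i_1}\cdots l_{i_p}$ stabilizes $\delta$ in $Y_t-\mathcal{V}(Z_t)$''), so your write-up matches the published argument in structure and essentially in level of detail; but as a self-contained proof, the final claim that the lifted reflections ``force the word $g$ to fix $\tilde\delta$ outright'' is a genuine gap rather than a consequence of Lemma~\ref{lemma_Step2}(2).
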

\begin{proof}
    By Lemma \ref{lemma_vanishingaffine}, we can choose $\delta$ to be supported on $Y_t-\mathcal{V}(Z_t)$. By Lemma \ref{lemma_Step2}, there are $l_1,...,l_k$ of $\pi_1(U_Y,t_0)$ which acts trivially on homology of $Z_{t_0}$. So if $g$ stabilizes in $Y_t$, then $g=l_{i_1}\cdots l_{i_p}$ stabilizes $\delta$ in $Y_t-\mathcal{V}(Z_t)$. So the trace of $\delta$ along the chosen loop forms a tube cycle $\tau_g(\delta)$ contained in $Y\setminus \mathcal{V}(Z)$.
\end{proof}

\subsection*{Step 3. Deformation of Tube Cycle to Nearby Smooth Fiber.}Now we are in the same situation as in Section \ref{sec_LefschetzInFamily}. Consider the total space of the family of hypersurfaces of degree $d$ over a small disk. Let $F_X$ be the defining equation of a degree $d$ hypersurface, and let $F_Y$ be the defining equation of a degree $d-1$ hypersurface. Let $F_P$ be the defining equation of a general hyperplane.
\begin{equation}\label{eqn_degneration}
    \mathcal{X}=\{(x,s)\in \mathbb P^{n+1}\times \Delta| (sF_X+F_YF_H)(x)=0\}\to \Delta,\ (x,s)\mapsto s.
\end{equation}

The total space is singular along the codimension 3 locus $S=\{s=0, F_X=0,F_Y=0,F_P=0\}$. As an example, when $n=3$ and $d=4$, $S$ is a curve of genus 19. $\mathcal{X}$ has transversal $A_1$ singularity along $S$. According to \cite{Cle77}, one needs to resolve the singularities to fully understand the topology and the limiting Hodge theory.

Just as in the case of the small resolution of $A_1$ singularity for a threefold, we resolve the singularity by blowing up $P$ defined by $F_H=s=0$ in the total space and get a small resolution 
$$\tilde{\mathcal{X}}\to \mathcal{X}.$$

Now the family $\tilde{\mathcal{X}}\to \Delta$ is semistable and the fiber at origin $\tilde{\mathcal{X}}_0$ becomes $Y\cup \tilde{P}$, where $\tilde{P}$ is the blow-up of $P$ along $S$, and the two components intersect transversely along $\tilde{Z}\cong Z$.  Then, we can apply Clemens' result.

\begin{lemma}(c.f. \cite[Theorem 5.7]{Cle77}, \cite[Lecture 1]{Cle82})\label{lemma_Clemens}
    There exist tubular neighborhoods $\mathcal{U}(Y)$ and $\mathcal{U}(\tilde{P})$ of $Y$ and $\tilde{P}$ in the total space $\mathcal{X}$, and projections $\pi_Y:\mathcal{U}(Y)\to Y$ and  $\pi_{\tilde{P}}:\mathcal{U}(\tilde{P})\to  \tilde{P}$ such that 

    \begin{itemize}
        \item[(1)] $\mathcal{U}(\tilde{Z}):=\mathcal{U}(Y)\cap \mathcal{U}(\tilde{P})$ is a tubular neighborhood of $\tilde{Z}$, and
        \item[(2)]    $\pi_Y\circ\pi_{\tilde{P}}=\pi_{\tilde{P}}\circ\pi_Y$ on $\mathcal{U}(\tilde{Z})$ and makes $\mathcal{U}(\tilde{Z})\to \tilde{Z}$ an analytic polydisk bundle.
        \item[(3)] There exist holomorphic coordinates $z_1,z_2,w_1,\ldots,w_{n-1}$ on $\mathcal{U}(\tilde{Z})$, such that $z_1$ is constant along $\pi_{\tilde{P}}$, $z_2$ is constant along $\pi_Y$, and
        \begin{equation}\label{eqn_crossing}
            z_1z_2=u\cdot s,
        \end{equation}
        where $u=u(w_1,\ldots,w_{n-1})$ is a non-vanishing analytic function depending on points of $\tilde{Z}$.
    \end{itemize}    
\end{lemma}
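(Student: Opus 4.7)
The plan is to reduce the statement to the standard local normal-crossings model at the codimension-two stratum $\tilde{Z}$ and then globalize by patching the local charts into a polydisk bundle over $\tilde{Z}$. The key input is semistability of $\tilde{\mathcal{X}}\to\Delta$: the total space is smooth, and the central fibre $Y+\tilde{P}$ is a reduced simple normal-crossing divisor with smooth intersection $\tilde{Z}$.

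For the local coordinate description, fix $p\in\tilde{Z}$. Since $Y$ and $\tilde{P}$ are smooth divisors meeting transversely at $p$ in the smooth ambient $\tilde{\mathcal{X}}$, the holomorphic implicit function theorem produces local coordinates $(z_1,z_2,w_1,\ldots,w_{n-1})$ near $p$ in which $Y=\{z_1=0\}$, $\tilde{P}=\{z_2=0\}$, and $(w_1,\ldots,w_{n-1})$ restrict to holomorphic coordinates on $\tilde{Z}$. The divisor of (the pullback of) $s$ to $\tilde{\mathcal{X}}$ equals $Y+\tilde{P}$ with multiplicity one on each component, which is exactly semistability, so on this chart the principal ideals $(s)$ and $(z_1z_2)$ coincide; consequently $z_1z_2=u\cdot s$ for a unique nonvanishing holomorphic unit $u$, and $u|_{\tilde{Z}}$ is a holomorphic function of $(w_1,\ldots,w_{n-1})$. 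This is the local form of \eqref{eqn_crossing}.

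To patch the charts into a global structure, I would observe that any two overlapping coordinate patches $(z_1,z_2,w)$ and $(z_1',z_2',w')$ cutting out the same divisors $Y$ and $\tilde{P}$ must satisfy $z_i'=\alpha_i(z,w)\cdot z_i$ with $\alpha_i$ nonvanishing holomorphic units, since $(z_i)$ and $(z_i')$ are the principal ideals of the same smooth divisor in the local ring. These transition data are exactly those of a rank-two analytic disk bundle over $\tilde{Z}$, so after shrinking the patches appropriately, $\mathcal{U}(\tilde{Z})$ acquires the structure of an analytic polydisk bundle. Its two factor projections, which locally forget $z_1$ respectively $z_2$, are commuting holomorphic submersions $\mathcal{U}(\tilde{Z})\to Y\cap\mathcal{U}(\tilde{Z})$ and $\mathcal{U}(\tilde{Z})\to\tilde{P}\cap\mathcal{U}(\tilde{Z})$. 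I would then extend these to full $C^\infty$ tubular projections $\pi_Y:\mathcal{U}(Y)\to Y$ and $\pi_{\tilde{P}}:\mathcal{U}(\tilde{P})\to\tilde{P}$ by gluing the fixed holomorphic projections on $\mathcal{U}(\tilde{Z})$ with geodesic tubular retractions (for any Hermitian metric on $\tilde{\mathcal{X}}$) on $Y\setminus\tilde{Z}$, $\tilde{P}\setminus\tilde{Z}$, via a partition of unity in the normal fibers equal to one on a slightly shrunken $\mathcal{U}(\tilde{Z})$. The commutation $\pi_Y\circ\pi_{\tilde{P}}=\pi_{\tilde{P}}\circ\pi_Y$ on $\mathcal{U}(\tilde{Z})$ is automatic from the product structure in the $(z_1,z_2)$-plane, completing (1)--(3).

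The main difficulty I anticipate is the patching step: verifying that the transition units $\alpha_i$ depend holomorphically on the transverse direction $w$ (not merely fibrewise). This is where the transverse meeting of smooth $Y,\tilde{P}$ in the smooth ambient $\tilde{\mathcal{X}}$ is indispensable, since it is what makes $\tilde{Z}$ a smooth Cartier divisor in each component and rigidifies the local model $z_1z_2=us$. Once the polydisk bundle is constructed, the global form of the relation $z_1z_2=u\cdot s$ reduces to checking consistency of the local units $u$ across overlaps, which follows from the cocycle $z_i'=\alpha_i z_i$ and the transformation of the left-hand side.
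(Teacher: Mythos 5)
First, a point of context: the paper does not prove this lemma at all --- it is quoted from Clemens (\cite[Theorem 5.7]{Cle77}, \cite[Lecture 1]{Cle82}), so the only fair comparison is with Clemens' construction itself. Your local analysis is correct and is indeed the standard starting point: semistability of $\tilde{\mathcal{X}}\to\Delta$ means the pullback of $s$ has divisor $Y+\tilde{P}$ with multiplicity one, so near a point of $\tilde{Z}$ one may take $z_1,z_2$ to be local equations of $Y$ and $\tilde{P}$ and conclude $z_1z_2=u\cdot s$ with $u$ a unit; chart by chart one can even normalize $u$ to depend only on the transverse coordinates $w$. The gap is in the globalization, which is precisely the nontrivial content of the cited theorem.

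Concretely, the patching step does not work as stated. A coordinate change preserving the two divisors has the form $z_i'=\alpha_i(z_1,z_2,w)\,z_i$ \emph{and} $w'=w'(z_1,z_2,w)$: the units depend on all variables, and the transverse coordinates are not preserved. Hence the naive cocycle observation does not produce a well-defined holomorphic retraction $\mathcal{U}(\tilde{Z})\to\tilde{Z}$, does not split the neighborhood into two commuting disk-bundle projections (for that you would need $\alpha_1$ independent of $z_2$, $\alpha_2$ independent of $z_1$, and $w'=w'(w)$), and does not give a unit $u$ constant along the polydisk fibers --- these are exactly the assertions (1)--(3) you are trying to prove, so the argument is circular at its key point. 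Note also that there is no holomorphic tubular neighborhood theorem: holomorphic retractions onto $Y$, $\tilde{P}$, $\tilde{Z}$ need not exist in general, and Clemens' Theorem 5.7 builds a compatible \emph{system} of projections by a careful stratified construction ($C^\infty$ projections that are holomorphic in the normal directions, arranged inductively so that they commute over $\mathcal{U}(\tilde{Z})$ and bring $s$ into the normal form \eqref{eqn_crossing}), not by reading it off from transition functions. Finally, your extension step --- gluing the projection near $\tilde{Z}$ to metric retractions elsewhere ``by a partition of unity in the normal fibers'' --- is also not a proof as written: a convex combination of two projections is not a projection, and making the glued map a submersive retraction compatible with $\pi_{\tilde{P}}$ on the overlap requires an isotopy or flow argument. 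In the context of this paper the correct move is the one the author makes: invoke Clemens' theorem rather than reprove it; if you do want a self-contained proof, the missing work is exactly the global compatibility of the projections, which cannot be obtained from the local normal form alone.
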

\begin{figure}[h]
\includegraphics[width=7cm]{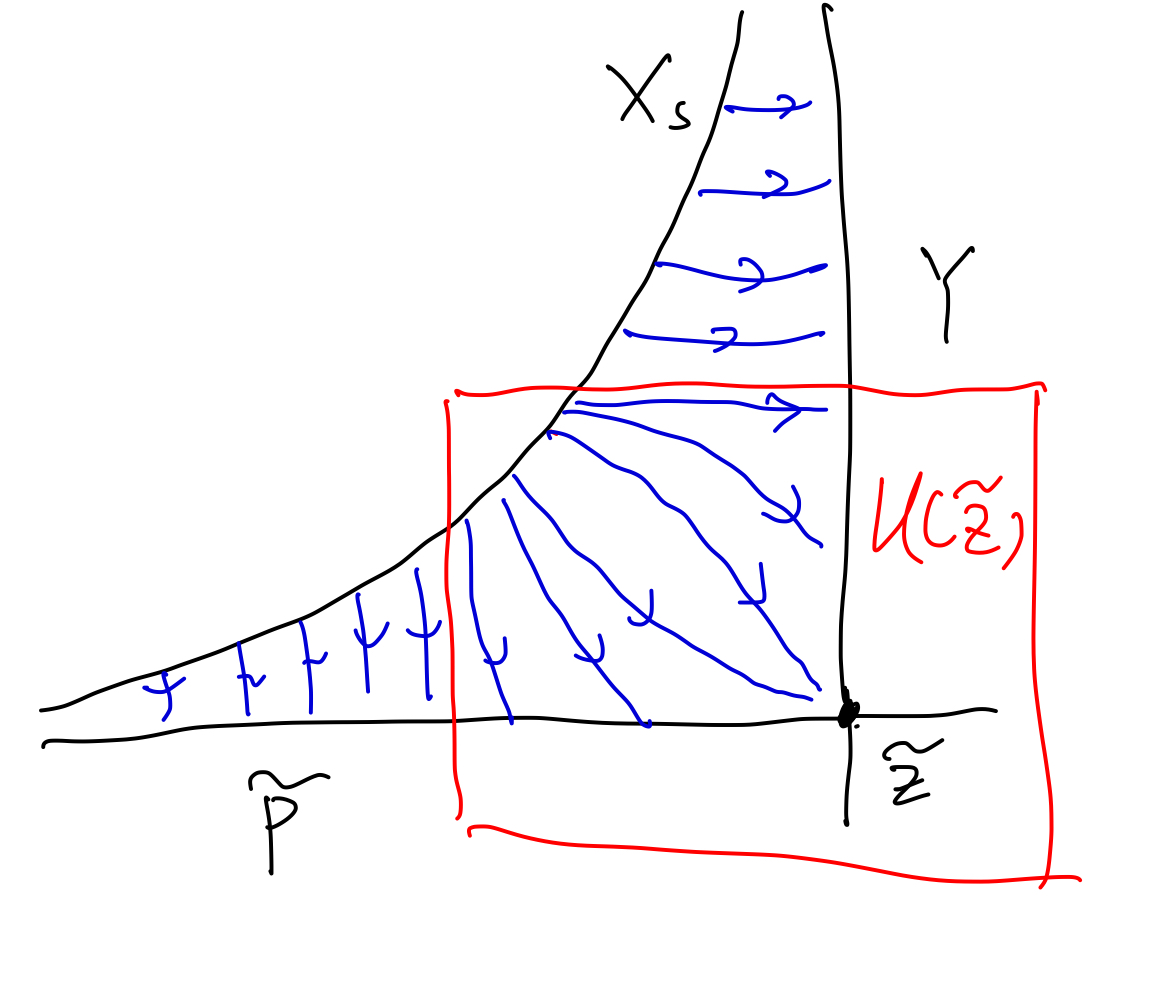}
\caption{Degeneration of Hypersurfaces}
\end{figure}

As a consequence, the boundary of $\mathcal{U}(\tilde{Z})$ has two components $|z_i|=\varepsilon$, for $i=1,2$ and they are both diffeomorphic to $S^1\times \Delta_j$-bundle over $\tilde{Z}$, where the disk $\Delta_j$ has coordinate $z_j$ with $j\neq i$. Then up to restricting $\Delta$ to a smaller disk, $\tilde{\mathcal{X}}\setminus \mathcal{U}(\tilde{Z})$ has two connected component $\tilde{\mathcal{X}}'$ and $\tilde{\mathcal{X}}''$, with $\tilde{\mathcal{X}}'$ contained in $\mathcal{U}(Y)$ and disjoint from $\tilde{P}$ (similarly, the other way around for $\tilde{\mathcal{X}}''$). 

\begin{claim}
    There is diffeomorphism 
    $$\partial\tilde{\mathcal{X}}'\cong \Delta\times \mathcal{S}$$
    that preserves projection to $\Delta$, where $\mathcal{S}$ is a $S^1$-bundle over $\tilde{Z}$.
\end{claim}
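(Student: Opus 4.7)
The plan is to apply Lemma \ref{lemma_Clemens} to obtain normal-crossing coordinates on $\mathcal{U}(\tilde{Z})$, identify $\partial\tilde{\mathcal{X}}'$ as a single face of the polydisk bundle, and then use the equation $z_1 z_2 = u \cdot s$ to trivialize this face over $\Delta$. In local coordinates $(z_1, z_2, w_1, \ldots, w_{n-1})$ on $\mathcal{U}(\tilde{Z})$, with $Y = \{z_1 = 0\}$ and $\tilde{P} = \{z_2 = 0\}$, the component $\tilde{\mathcal{X}}' \subset \mathcal{U}(Y)\setminus\mathcal{U}(\tilde{P})$ occupies the region $|z_1| \le \varepsilon,\ |z_2| > \varepsilon$ near $\tilde{Z}$, so $\partial\tilde{\mathcal{X}}'$ is the face $\{|z_2| = \varepsilon\}$ of $\mathcal{U}(\tilde{Z})$. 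On this face, the relation $z_1 = us/z_2$ expresses $z_1$ smoothly in terms of $(s, z_2, w)$ after shrinking $\Delta$ so that $|us|/\varepsilon < \varepsilon$.

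I would then define $\Phi : \partial\tilde{\mathcal{X}}' \to \Delta \times \mathcal{S}$ locally by $(z_1, z_2, w) \mapsto (s, (z_2, w))$, where $s = z_1 z_2/u$ and $\mathcal{S} = \{|z_2| = \varepsilon\}$ is the $\varepsilon$-sphere bundle (over $\tilde{Z}$) of the line-bundle factor of $\mathcal{U}(\tilde{Z})$ corresponding to $\tilde{P}$. The inverse $(s, (z_2, w)) \mapsto (us/z_2, z_2, w)$ is smooth, so $\Phi$ is a local diffeomorphism, and it commutes with projection to $\Delta$ by construction. To define $\mathcal{S}$ globally, invoke Lemma \ref{lemma_Clemens}(2): since the transition functions of the polydisk bundle $\mathcal{U}(\tilde{Z}) \to \tilde{Z}$ preserve the divisors $\{z_i=0\}$ (globally equal to $Y$ and $\tilde{P}$), the bundle splits as a fiber product of the unit disk bundles of $N_{Y/\tilde{\mathcal{X}}}|_{\tilde{Z}}$ and $N_{\tilde{P}/\tilde{\mathcal{X}}}|_{\tilde{Z}}$, and $\mathcal{S}$ is the $\varepsilon$-circle bundle of the second.

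The main obstacle is verifying that the local formulas $\Phi$ glue consistently across coordinate changes. Under a transition $z_1 \mapsto \mu z_1,\ z_2 \mapsto \lambda z_2$ (with $\mu,\lambda$ non-vanishing on the overlap), the relation $z_1 z_2 = u\cdot s$ forces $u \mapsto \mu\lambda\cdot u$, so the expression $s = z_1 z_2/u$ is preserved; and $z_2 \mapsto \lambda z_2$ is precisely the transition function of the line bundle whose circle bundle is $\mathcal{S}$, so the $(z_2, w)$-coordinate descends to a well-defined point of $\mathcal{S}$. Hence the local maps assemble to a global diffeomorphism $\partial\tilde{\mathcal{X}}' \cong \Delta \times \mathcal{S}$ fibered over $\Delta$, proving the claim.
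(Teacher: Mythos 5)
Your proposal is correct and follows essentially the same route as the paper: restrict the normal-crossing relation $z_1 z_2 = u\cdot s$ from Lemma \ref{lemma_Clemens} to the face $\{|z_2|=\varepsilon\}$ of $\mathcal{U}(\tilde{Z})$, so that $s = z_1 z_2/u$ (equivalently $z_1 = us/z_2$, after shrinking $\Delta$) gives the fiber-preserving trivialization over $\Delta$ with fiber the circle bundle $\mathcal{S}$ over $\tilde{Z}$. The paper's proof is exactly this one-line observation; your additional verification of the inverse map and of consistency across charts only makes explicit what the paper leaves implicit.
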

\begin{proof}
   Apply \eqref{eqn_crossing} restricted to $|z_2|=\varepsilon$, the diffeomorphism is given by $s=\frac{z_1z_2}{u}$.
   %$$(z_1,z_2)\mapsto (s=\frac{z_1z_2}{u},z_2)$$ 
\end{proof}

Now, as a consequence of Lemma \ref{EhresmannWithBoundary}, we have  

\begin{corollary}
 Up to restricting to a smaller disk centered at $s=0$, the family
\begin{equation} \label{eqn_TubeofY}
    \pi':\tilde{\mathcal{X}}'\to \Delta
\end{equation}
is $C^{\infty}$ trivial. In particular, we have a  family $\{X_s'\}_{s\in \Delta}$ of submanifolds of $X_s$ with boundaries. When $s=0$, $X_0'\subseteq Y$ is disjoint from $P$.
\end{corollary}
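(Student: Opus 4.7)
The plan is to apply Lemma \ref{EhresmannWithBoundary} — the Ehresmann fibration theorem for manifolds with boundary — to the restriction $\pi' : \tilde{\mathcal{X}}' \to \Delta$. To invoke this lemma I must verify three conditions: that $\pi'$ is proper, that it is a submersion on the interior of $\tilde{\mathcal{X}}'$, and that its restriction to $\partial \tilde{\mathcal{X}}'$ is also a proper submersion over $\Delta$.

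For properness, note that $\tilde{\mathcal{X}}\to \Delta$ is proper (fibers are projective), and $\tilde{\mathcal{X}}'$ is a closed subset of $\tilde{\mathcal{X}}$, obtained by removing the open tubular neighborhood $\mathcal{U}(\tilde{Z})$ together with the open component $\tilde{\mathcal{X}}''$ of $\tilde{\mathcal{X}}\setminus \mathcal{U}(\tilde{Z})$; hence $\pi'$ is proper. For the submersion condition on the interior, I use that $\tilde{\mathcal{X}}$ is smooth (being a small resolution of $\mathcal{X}$ making the family semistable) and that $\pi$ fails to be a submersion only along the crossing locus $\tilde{Z}\subseteq \tilde{\mathcal{X}}_0$; by construction $\tilde{\mathcal{X}}'\cap \mathcal{U}(\tilde{Z})=\emptyset$, so $\pi'$ is a submersion throughout. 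For the boundary condition I invoke the claim just established: there is a diffeomorphism $\partial\tilde{\mathcal{X}}'\cong \Delta\times \mathcal{S}$ intertwining $\pi'$ with the projection $\Delta\times \mathcal{S}\to \Delta$, from which properness and the submersion property of $\pi'|_{\partial\tilde{\mathcal{X}}'}$ are immediate.

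With all three hypotheses in place, Lemma \ref{EhresmannWithBoundary} gives that $\pi'$ is a locally trivial $C^\infty$ fiber bundle of manifolds with boundary over $\Delta$. Since $\Delta$ is contractible (and may be shrunk if needed to patch together local trivializations), the bundle is in fact globally trivial, which yields a diffeomorphism $\tilde{\mathcal{X}}'\cong \Delta\times X_0'$ commuting with projection to $\Delta$, and hence the claimed family $\{X_s'\}_{s\in \Delta}$ of codimension-zero submanifolds with boundary inside $X_s$. The final assertion is immediate from the construction: $\tilde{\mathcal{X}}'\subseteq \mathcal{U}(Y)\setminus \mathcal{U}(\tilde{Z})$, so $X_0' = \tilde{\mathcal{X}}'\cap \tilde{\mathcal{X}}_0\subseteq Y\setminus \tilde{Z}$, and since $Y\cap \tilde{P}=\tilde{Z}$ has been excised, $X_0'$ is disjoint from $\tilde{P}$ and hence from $P$ under the small-resolution map.

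I do not anticipate a genuine obstacle here: the geometric hard work has already been done in Lemma \ref{lemma_Clemens} and the preceding claim, which pin down the normal-crossing model of $\tilde{\mathcal{X}}$ near $\tilde{Z}$ and supply the product structure on $\partial\tilde{\mathcal{X}}'$. The only mild subtleties are bookkeeping ones — checking that the boundary product structure really matches the input required by Lemma \ref{EhresmannWithBoundary}, and possibly shrinking $\Delta$ one more time to upgrade local triviality to global triviality.
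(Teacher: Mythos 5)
Your argument is essentially the paper's: the corollary is stated there as a direct consequence of the boundary product structure $\partial\tilde{\mathcal{X}}'\cong \Delta\times \mathcal{S}$ from the preceding claim together with Lemma \ref{EhresmannWithBoundary}, and your verification of properness, interior submersivity away from $\mathcal{U}(\tilde{Z})$, and boundary submersivity just makes those hypotheses explicit. The concluding upgrade from local to global triviality over the (shrunken, contractible) disk and the identification $X_0'\subseteq Y\setminus \tilde{Z}$ match the paper's intent, so the proposal is correct.
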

Recall that in Corollary \ref{cor_tubeOnOpen}, we already find the tube cycle $\tau_g(\delta)$ supported on the complement of a tubular neighborhood of $Z$. Then we may apply the diffeomorphism induced from \eqref{eqn_TubeofY} and deform the tube cycle $\tau_g(\delta)$ to the nearby $X_s$. However, it is not clear at this moment why such a class arises from tube mapping over a primitive vanishing cycle on hyperplane sections of $X_s$ — the trivialization of \eqref{eqn_TubeofY} may not preserve hyperplane sections.

To fix this issue, we need a parametric version of \eqref{eqn_TubeofY} preserving hyperplane sections. This amounts to considering the incidence manifold 
$$W=\{(x_s,t)\in \tilde{\mathcal{X}}'\times U| x_s\in X_s'\cap H_t\}.$$

There are two natural projection from $W$: $\sigma: (x_s,t)\mapsto x_s$, which blows up the base locus of Lefschetz pencil $\cup_{s\in\Delta}(X_s'\cap H_{t_1}\cap H_{t_2})$, and $q:(x_s,t)\mapsto (s,t)$. In particular, there is a commutative diagram
\begin{figure}[ht]
    \centering
\begin{tikzcd}
W\arrow[r,"\sigma"] \arrow[d,"q"]& \tilde{\mathcal{X}}' \arrow[d,"\pi'"] \\ 
\Delta\times U\arrow[r]& \Delta. 
\end{tikzcd}
\end{figure}

\begin{proposition}\label{prop_open2dimfamily}
   $q:W\to \Delta\times U$ is a locally trivial fiber bundle. Consequently, we get a two-parameter locally trivial family of hyperplane sections
    $$\{X_s'\cap H_t\}_{(s,t)\in \Delta\times U}.$$
\end{proposition}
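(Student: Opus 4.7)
The plan is to apply the version of Ehresmann's theorem for manifolds with boundary (Lemma~\ref{EhresmannWithBoundary}) to $q : W \to \Delta \times U$. It suffices to verify that $q$ is proper and that both $q$ and its restriction $q|_{\partial W}$ are surjective submersions of smooth manifolds with boundary.

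Properness is immediate: the fiber of $q$ over $(s,t)$ is the closed subset $X_s' \cap H_t$ of the compact manifold with boundary $X_s'$. For smoothness of $W$ and submersivity on the interior, observe that $W$ sits inside $\tilde{\mathcal{X}}' \times U$ and, away from the base locus $\bigcup_{s \in \Delta}(X_s' \cap H_{t_1} \cap H_{t_2})$ of the fixed pencil, is cut out by the single linear equation $a_t(x) = 0$, where $H_t = \{a_t = 0\}$; on the base locus itself $\sigma$ realizes $W$ as the blow-up of $\tilde{\mathcal{X}}'$ along a smooth codimension-two submanifold, which remains smooth. The fiber $X_s' \cap H_t$ is smooth for every $(s,t) \in \Delta \times U$: for $s \ne 0$ this is Corollary~\ref{cor_uniformLefschetz}(i), while for $s = 0$ we have $X_0' \subseteq Y$ disjoint from $P$, so $X_0' \cap H_t \subseteq Y \cap H_t$ is smooth by Corollary~\ref{cor_uniformLefschetz}(ii). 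Combined with the submersivity of $\pi' : \tilde{\mathcal{X}}' \to \Delta$ supplied by the $C^\infty$ triviality of \eqref{eqn_TubeofY}, standard transversality shows that $dq$ is surjective at every interior point of $W$.

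For the boundary, the claim immediately preceding the proposition identifies $\partial \tilde{\mathcal{X}}' \cong \Delta \times \mathcal{S}$ with $\mathcal{S}$ an $S^1$-bundle over $\tilde{Z}$. Thus $\partial W$ is carved out of $\Delta \times \mathcal{S} \times U$ by the incidence equation $a_t = 0$, which pulls back along the circle bundle from a defining equation on $\tilde{Z}$. Since $\tilde{Z} \cap H_t$ is smooth for each $t \in U$ by Corollary~\ref{cor_uniformLefschetz}(ii), the fiber $\partial X_s' \cap H_t$ is a smooth circle bundle over $\tilde{Z} \cap H_t$ depending smoothly on $(s,t)$, with the $s$-dependence trivial by the product structure of $\partial \tilde{\mathcal{X}}'$. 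Hence $q|_{\partial W}$ is also a submersion, and Ehresmann with boundary yields the local triviality of $q$.

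The main subtlety I expect is the compatibility of the blow-up $\sigma$ with the boundary structure near the base locus of the pencil; one needs to know that the exceptional divisor never meets $\partial W$. I would handle this by choosing the pencil $\mathbb{L}$ so that $H_{t_1} \cap H_{t_2}$ is disjoint from $\tilde{Z}$, which is a generic condition (the expected intersection $Y \cap P \cap H_{t_1} \cap H_{t_2}$ has dimension $n - 3$ in $\mathbb{P}^{n+1}$ and can be arranged empty for $n \geq 3$). With this genericity assumption folded into the earlier choices, the argument above goes through without modification.
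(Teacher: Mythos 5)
Your main line of attack is the paper's: check that $q$ and $q|_{\partial W}$ are proper submersions and feed this into Lemma \ref{EhresmannWithBoundary}, with the boundary case handled by recognizing the fibers of $q|_{\partial W}$ as circle bundles over the smooth hyperplane sections $Z_t$, $t\in U$, from Corollary \ref{cor_uniformLefschetz}. That part of your write-up is essentially the paper's proof.

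The trouble is the last paragraph, where you make the argument conditional on choosing the pencil so that its axis $H_{t_1}\cap H_{t_2}$ misses $\tilde Z$. That genericity cannot be arranged: $Z=Y\cap P$ and the axis both have dimension $n-1$ in $\mathbb{P}^{n+1}$, so by the projective dimension theorem their intersection is nonempty whenever $(n-1)+(n-1)\ge n+1$, i.e.\ for every $n\ge 3$; in the paper's main case $n=3$ it is a nonempty finite set of points. So the inequality goes the opposite way from what you claim ("empty for $n\ge 3$"), and a proof that only "goes through" under that assumption is incomplete in exactly the relevant dimensions. Fortunately the worry is unnecessary, which is why the paper never imposes such a condition. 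Near a point of $\partial W$ lying over the axis, choose local holomorphic coordinates in which the axis is $\{u_1=u_2=0\}$ and $W$ is the smooth incidence hypersurface $\{u_1+tu_2=0\}$; since $u_2$ vanishes there, the vector field $\partial_t$ is tangent to this hypersurface and is killed by the (t-independent) boundary defining function, so $dq$ and $d(q|_{\partial W})$ are automatically surjective in the $U$-directions at such points. The only genuine requirement, both there and away from the axis, is that the fiber $X_s'\cap H_t$ meet the boundary hypersurface $\partial X_s'$ transversally, so that $W$ is a manifold with boundary and the boundary fibers are the asserted circle bundles over the smooth $Z_t$ --- precisely the transversality your (and the paper's) circle-bundle description already supplies. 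Deleting the final paragraph and replacing it by this observation would make your argument complete and essentially identical to the paper's.
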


\begin{proof}
$q$ is clearly a submersion in the interior of $W$. To show $q$ is submersion on the boundary $\partial W$, note the fiber of $\partial W\to \Delta\times U$ over $(s,t)$ is transversal intersection $\partial \tilde{\mathcal{X}}'\cap H_t$, which is diffeomorphic to a $S^1$-bundle over $Z_t=Z\cap H_t$. Since $Z_t$ is smooth for $t\in U$ by Corollary \ref{cor_uniformLefschetz}, the fiber of $q|_{\partial W}$ varies diffeomorphically, and $q|_{\partial W}$ is submersion. Now the result follows from Lemma \ref{EhresmannWithBoundary} again. \end{proof}

Proposition \ref{prop_open2dimfamily} allows us we perform monodromy in the one-dimensional analytic open space $U$ simultaneously: We can subdivide the loop $g\in \pi_1(U,t_0)$ into line segments $\cup_il_i$, such that each $l_i$ is contained in a contractible neighborhood $U_i$. We choose a point $c_i$ to be the tail of $l_i$. Then, up to restricting the $\Delta$ to a smaller disk centered at $s=0$, and for each $i$, there is a trivialization 
$$\psi_i: W|_{\Delta\times U_i}\xrightarrow{\cong} (X_{0}'\cap H_{c_i})\times \Delta\times U_i$$
that preserves projection to $\Delta\times U_i$.  Now, let $\alpha$ be a topological cycle on $X_0'\cap H_{c_i}$, then $\psi_i^{-1}(\alpha\times \{s\} \times l_i)$ spreads out the class over the segment $l_i$ for each $s\in \Delta$. Moreover, it concatenates to the chain $\psi_{i+1}^{-1}(\alpha\times \{s\} \times l_{i+1})$ since their endpoints differ by a boundary $\partial \Gamma$, where $\Gamma\subseteq X_{s}'\cap H_{c_{i+1}}$ is a chain induced by a family of self diffeomorphisms parameterized by $l_i$.

\subsection*{Proof of Theorem \ref{thm_degeneration}}

Now we are ready to give a proof of the main theorem of this section. Suppose Conjecture \ref{Conj_Clemens} holds for a hypersurface $Y$ odd degree $d_0=d-1$. We choose a one-parameter family of degree $d$ hypersurfaces with the special fiber union of a degree $d-1$ hypersurface and a hyperplane $Y\cup P$ as in \eqref{eqn_degneration}. Let $U$ be the one parameter family of hyperplanes as in Corollary \eqref{cor_uniformLefschetz}.

Now, let $\delta_0$ be a primitive vanishing cycle on the hyperplane section $Y\cap H_{t_0}$ and a loop $g\in \pi_1(U,t_0)$ such that $g_*(\delta_0)=\delta_0$. By Corollary \ref{cor_tubeOnOpen}, we can choose a representative such that the flat translates of $\delta_0$ along $g$ are contained in $Y_t\setminus \mathcal{U}(Z)$. Let $\delta_s$ in $X_s'\cap H_{t_0}$ be the translates of $\delta_0$. Then automatically $g$ stabilizes $\delta_s$ by Proposition \ref{prop_open2dimfamily}. Moreover, according to Proposition \ref{prop_c}, $\delta_s$ are still primitive vanishing cycles on $X_s\cap H_{t_0}$. The trace of $\delta_s$ along the same loop $g$ defines a topological cycle $A_s$ in the total space $W_s=\bigcup_{t\in U}(X_s'\cap H_t)$, and the image of $A_s$ in $H_n(X_s,\Z)$ via the composite $W_s\to X_s'\hookrightarrow X_s$ is the tube cycle $\tau_g(\delta_s)$. Moreover $\tau_{g}(\delta_s)$ is a $n$-cycle on $X_s$ and specializes to $\tau_{g}(\delta_0)$ as $s\to 0$. Finally, by Proposition \ref{prop_nonzero} below, $\tau_{g}(\delta_s)$ is nonzero in $H_n(X_s,\Z)$. Since $n$ is odd by assumption and $X_s$ is a hypersurface, $H_n(X_s,\Z)=H_n(X_s,\Z)_{prim}$. So $\tau_{g}(\delta_s)$ is nonzero in the primitive homology. So Conjecture \ref{Conj_Clemens} holds for hypersurfaces of degree $d=d_0+1$ as well.\qed

\begin{proposition}\label{prop_nonzero}
   Assume the tube class $\tau_g(\delta_0)$ is not the zero in $H_n(Y,\Z)$. Then the class $\tau_g(\delta_s)$ is nonzero in $H_n(X_s,\Z)$ for $s\neq 0$.
\end{proposition}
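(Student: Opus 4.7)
The plan is to push $\tau_g(\delta_s)$ forward into the total space $\tilde{\mathcal{X}}$ and compare with $\tau_g(\delta_0)$ via the retraction onto the special fiber. Concretely, I would study the composite
$$H_n(X_s,\Z) \longrightarrow H_n(\tilde{\mathcal{X}},\Z) \cong H_n(\tilde{\mathcal{X}}_0,\Z) = H_n(Y\cup \tilde P,\Z),$$
where the first arrow is induced by the inclusion of the smooth fiber and the isomorphism comes from the deformation retraction of the semistable family $\tilde{\mathcal{X}}\to\Delta$ onto its special fiber (Lemma \ref{lemma_Clemens} and \cite{Cle77}). The goal is to show the image of $\tau_g(\delta_s)$ under this composite is nonzero; then a fortiori $\tau_g(\delta_s)\neq 0$ in $H_n(X_s,\Z)$.

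The first step is a Mayer--Vietoris computation for $Y\cup \tilde P$. The two components meet transversally along $\tilde Z\cong Z$, a smooth complete intersection of dimension $n-1$ in $\mathbb P^{n+1}$. Since $n$ is odd, weak Lefschetz gives $H^{n-2}(\tilde Z,\Z)\cong H^{n-2}(\mathbb P^{n+1},\Z)=0$, and Poincar\'e duality then yields $H_n(\tilde Z,\Z)=0$. From the Mayer--Vietoris sequence I therefore obtain the injection
$$H_n(Y,\Z)\oplus H_n(\tilde P,\Z)\hookrightarrow H_n(Y\cup\tilde P,\Z).$$

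The second step is to identify the image of $\tau_g(\delta_s)$ under the composite above. By the construction of Section \ref{sec_LefschetzInFamily}, $\tau_g(\delta_s)$ is supported in $\tilde{\mathcal{X}}'\subset \mathcal{U}(Y)$, and the $C^{\infty}$-trivialization $\tilde{\mathcal{X}}'\cong \Delta\times X_0'$ is precisely what was used to transport $\tau_g(\delta_0)$ to $\tau_g(\delta_s)$. Since $\tilde{\mathcal{X}}'$ is disjoint from $\tilde P$, the restricted Clemens retraction maps $\tilde{\mathcal{X}}'$ into $Y$, and both retractions $\tilde{\mathcal{X}}'\to X_0'\subset Y$ are homotopic inside $\tilde{\mathcal{X}}$. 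Hence the composite sends $\tau_g(\delta_s)$ to the image of $\tau_g(\delta_0)\in H_n(Y,\Z)$ under $H_n(Y,\Z)\hookrightarrow H_n(Y\cup\tilde P,\Z)$, which is nonzero by hypothesis and by the injectivity above. This establishes the proposition.

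The main technical obstacle I anticipate is the homotopy compatibility invoked in the previous paragraph: verifying that the Ehresmann-type trivialization of $\tilde{\mathcal{X}}'$ used to define $\tau_g(\delta_s)$ agrees up to homotopy within $\tilde{\mathcal{X}}$ with the Clemens retraction restricted to $\tilde{\mathcal{X}}'$. Since $\tilde{\mathcal{X}}'$ deformation retracts onto $X_0'$ in two natural ways and both factor through the contractible disk $\Delta$, this should reduce to the uniqueness of deformation retractions of a tubular neighborhood onto its core, but it needs to be set up carefully inside the normal crossing model from Lemma \ref{lemma_Clemens}.
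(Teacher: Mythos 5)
Your argument is correct, but it takes a genuinely different route from the paper. The paper works cohomologically with the limiting mixed Hodge structure of the semistable family: it invokes Clemens' exact sequence \eqref{eqn_ssfamily} to see that the (Poincar\'e dual of the) nonzero class $\tau_g(\delta_0)$ survives in $W_nH^n_{\textup{lim}}=i^*j_*R^n\pi_{*}\underline{\Z}$, interprets the family $\{\tau_g(\delta_s)\}$ as a section $\eta$ of $j_*R^n\pi_{*}\underline{\Z}$ with $\eta(0)\neq 0$, and concludes nonvanishing at nearby $s$. You instead push $\tau_g(\delta_s)$ into the total space and use the retraction $H_n(X_s,\Z)\to H_n(\tilde{\mathcal{X}},\Z)\cong H_n(Y\cup\tilde P,\Z)$, detecting nonvanishing by the Mayer--Vietoris injection $H_n(Y,\Z)\oplus H_n(\tilde P,\Z)\hookrightarrow H_n(Y\cup\tilde P,\Z)$, which is valid since $H_n(\tilde Z,\Z)\cong H^{n-2}(\tilde Z,\Z)\cong H^{n-2}(\mathbb P^n,\Z)=0$ for $n$ odd (with $\Z$ coefficients, by Lefschetz and Poincar\'e duality). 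Your Mayer--Vietoris step is essentially the homological shadow of the paper's sequence \eqref{eqn_ssfamily}, but your detection mechanism avoids the limit Hodge theory and the invariant-cycle identification entirely, and it works integrally, so it would even detect torsion classes; what it buys less of is contact with the Hodge-theoretic framework the paper uses elsewhere.

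One simplification: the ``main technical obstacle'' you anticipate (comparing the Ehresmann-type trivialization of $\tilde{\mathcal{X}}'$ with the Clemens retraction up to homotopy) is not actually needed. Since the cycles $\tau_g(\delta_s)$ are constructed so as to vary continuously in $s$ inside $\tilde{\mathcal{X}}'$ (the $s$-parameter family sweeps out an $(n+1)$-chain with boundary $\tau_g(\delta_s)-\tau_g(\delta_0)$), the classes of $\tau_g(\delta_s)$ and $\tau_g(\delta_0)$ already agree in $H_n(\tilde{\mathcal{X}},\Z)$. Because the cycle $\tau_g(\delta_0)$ lies in $Y\subset\tilde{\mathcal{X}}_0$ and the retraction is the identity on the special fiber, its image under $H_n(\tilde{\mathcal{X}},\Z)\cong H_n(Y\cup\tilde P,\Z)$ is just the image of $[\tau_g(\delta_0)]\in H_n(Y,\Z)$ under the inclusion, with no comparison of retractions required.
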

\begin{proof}

Recall in the beginning of Step 3, we produce a small resolution on the total space of the family \eqref{quartic_family} of degree $d$ hypersurfaces and get a semistable family
\begin{equation}
   \tilde{\mathcal{X}}\to \Delta. \label{semistable_family} 
\end{equation}

 There is an associated limiting mixed Hodge structure $H^n_{\textup{lim}}$ whose weight-$n$ filtration $W_nH^n_{\textup{lim}}$ is contributed from the image of $H^n(Y\cup \tilde{P})$, which fits into an exact sequence \cite[(4.6)]{Clemens_Degeneration}
\begin{equation}\label{eqn_ssfamily}
    0\to H^{n-1}(Y\cap \tilde{P},\mathbb Z)_{van}\to H^n(Y\cup \tilde{P},\mathbb Z)\to H^n(Y,\mathbb Z)_{prim}\oplus H^n(\tilde{P},\mathbb Z)_{prim}\to 0.
\end{equation}

 Alternatively, denote $\pi:\tilde{\mathcal{X}}^*\to \Delta^*$ the restriction of \eqref{semistable_family} to the punctured disk. Then $W_nH^n_{\textup{lim}}$ consists of invariant sections of the local system $j_*R^n\pi_{*}\underline{\mathbb Z}$, which are precisely $i^*j_*R^n\pi_{*}\underline{\mathbb Z}$.

Since by our construction, the tube cycle $\tau_{g}(\delta_s)$ specializes to $\tau_{g}(\delta_0)$ contained in $Y\setminus \mathcal{U}(P)$, and by assumption, $\tau_{g}(\delta_0)\in H_n(Y,\mathbb Z)_{prim}$ is nonzero. In particular, the Poincare dual of $\tau_{g}(\delta_0)$ is in $W_nH^n_{\textup{lim}}=i^*j_*R^n\pi_{*}\underline{\mathbb Z}$ from the exact sequence \eqref{eqn_ssfamily}. Therefore $\{\tau_{g}(\delta_s)\}_{s\in \Delta}$ defines a section $\eta$ in
$$j_*R^n\pi_{*}\underline{\mathbb Z}$$
over $\Delta$ with $\eta(0)\neq 0$. It follows that $\eta(s)$ is not zero for $s\neq 0$ close enough to $0$. In particular, for such $s$, $\tau_{g}(\delta_s)$ is not a zero class in $H_n(X_{s},\mathbb Z)$.   
\end{proof}

\subsection*{Proof of Theorem \ref{MainThm}}
We prove this by induction. The theorem is true for $d=3$ by Proposition \ref{prop_cubic3fold}. Then we inductively degenerate a hypersurface of degree $d$ in $\mathbb P^4$ to the union of a general hypersurface of degree $d-1$ and a hyperplane meeting transversely. Then, by Theorem \ref{thm_degeneration} and Lemma \ref{lemma}, if the theorem holds for degree $d-1$ hypersurface, it holds for degree $d$ hypersurface as well.
\qed

\section*{Appendix: Relative Ehresmann's Lemma}
Ehresmann's lemma says that proper submersion is topologically locally trivial. We want to show the same result holds for a pair.
\begin{lemma}\label{appendix} Let $N\subseteq M$ be a pair of smooth manifolds. Suppose $\pi:M\to B$ is a proper submersion to a smooth base, and moreover, the restriction $\pi|_{N}$ is also a submersion, then for each $b\in B$, there is a neighborhood $U$ of $b$ and a fiber preserving diffeomorphism
$$\Psi:\pi^{-1}(U)\xrightarrow{\cong}M_b\times U,$$
which satisfies 
$$\Psi(N\cap \pi^{-1}(U))=N_b\times U,$$
where $M_b=\pi^{-1}(b)$ and $N_b=\pi_{|N}^{-1}(b)$.
\end{lemma}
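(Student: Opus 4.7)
The plan is to adapt the vector-field-flow proof of the classical Ehresmann lemma, the only new ingredient being to arrange that the horizontal lifts are tangent to $N$ along $N$, so that their flows preserve the submanifold. Since the classical argument constructs a trivialization by flowing along horizontal lifts of coordinate vector fields on $B$, the whole question reduces to producing lifts with this additional tangency property.

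The first step is a doubly-submersive local normal form: for each $p \in \pi|_N^{-1}(b)$, I want coordinates $(x_1,\dots,x_n)$ on $M$ near $p$ and $(y_1,\dots,y_k)$ on $B$ near $b$ in which
\[
\pi(x_1,\dots,x_n)=(x_1,\dots,x_k)\quad\text{and}\quad N=\{x_{k+r+1}=\cdots=x_n=0\},
\]
with $k+r=\dim N$. One first applies the submersion theorem to $\pi$ to straighten it to a linear projection; since $\pi|_N$ is a submersion, $T_p N$ projects surjectively onto $T_b B$, so a linear shear in the fiber directions straightens $T_p N$ to a coordinate subspace. A final implicit-function adjustment in the directions transverse to $N$ straightens $N$ itself, without disturbing the projection. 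At points of $M_b\setminus N$ the ordinary submersion normal form suffices.

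The second step is a partition-of-unity assembly. By properness of $\pi$, the fiber $M_b$ is compact, so a neighborhood $\pi^{-1}(B_0)$ of $M_b$ admits a finite open cover using adapted charts around points of $N$ and ordinary submersion charts away from $N$, arranged so that every chart meeting $N$ is adapted. For each coordinate vector field $\partial/\partial y_i$ on $B$ near $b$, set $\widetilde X_i:=\sum_\alpha \rho_\alpha (\partial/\partial x_i)_\alpha$ using a subordinate partition of unity $\{\rho_\alpha\}$. Then $d\pi(\widetilde X_i)=\partial/\partial y_i$ everywhere, and at any $q\in N$ only adapted charts contribute, with each local lift $\partial/\partial x_i$ ($i\leq k\leq k+r$) lying in $T_q N$; the convex combination $\widetilde X_i(q)$ therefore also lies in $T_q N$.

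Finally, the flows $\phi_i^t$ of $\widetilde X_i$ are defined for small $t$ on $\pi^{-1}(U)$ for a sufficiently small neighborhood $U$ of $b$, and
\[
\Psi^{-1}(q,y):=\phi_1^{y_1}\circ\cdots\circ\phi_k^{y_k}(q),\qquad q\in M_b,\ y\in U,
\]
is a fiber-preserving diffeomorphism onto $\pi^{-1}(U)$; tangency of each $\widetilde X_i$ to $N$ forces each flow to preserve $N$, yielding $\Psi(N\cap\pi^{-1}(U))=N_b\times U$. I expect the main obstacle to be pinning down the doubly-submersive normal form cleanly: the partition-of-unity step then runs mechanically, but one must verify that every chart used at a point of $N$ is adapted, so that no term in the convex combination can knock the lifted vector field off of $TN$.
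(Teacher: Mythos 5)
Your proposal is correct, but it follows a genuinely different route from the paper's proof. The paper applies the ordinary Ehresmann lemma twice --- once to $M$, reducing to $M=M_b\times B$ with $\pi$ the projection, and once to $N$ --- and then reads the trivialization of $N$ as a smooth family of embeddings $f_y:N_b\hookrightarrow M_b$, which it extends to a family of ambient self-diffeomorphisms $F_y$ of $M_b$ via the isotopy extension theorem (Cerf, Palais); the trivialization of the pair is then the inverse of $(x,y)\mapsto(F_y(x),y)$. You instead rerun the vector-field proof of Ehresmann's lemma itself: the doubly-adapted normal form (straighten $\pi$, then straighten $N$ by a fiber-preserving shear and a graph adjustment, which exists exactly because $\pi|_N$ is also submersive) is correct, and gluing the local lifts by a partition of unity subordinate to a cover in which every chart meeting $N$ is adapted does produce lifts $\widetilde X_i$ with $d\pi(\widetilde X_i)=\partial/\partial y_i$ that are tangent to $N$ along $N$, so their flows preserve $N$ and the usual flow-composition trivializes the pair. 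Your argument is more self-contained (no appeal to isotopy extension), at the cost of doing the normal-form and patching work by hand; the paper's is shorter given its cited black boxes. One implicit hypothesis you should make explicit: $N$ must be a closed (properly embedded) submanifold of $M$. You need this both to choose the non-adapted charts disjoint from $N$ (so that no unadapted term contaminates $\widetilde X_i$ at points of $N$) and to conclude that integral curves of a field tangent to $N$ along $N$ cannot exit $N$ --- for a non-closed $N$ the flow can run off an end. The paper's proof needs the same assumption, since $\pi|_N$ must be proper to apply Ehresmann to $N$; it holds in the intended application, where $N=\partial M$ inside the double.
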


The theorem follows from Thom's second isotopy theorem \cite[Proposition 11.2]{Mather}. Here we present an elementary proof communicated to me by Gael Meigniez.

\begin{proof}
    First, apply Ehresmann's lemma to $M$, and by replacing $B$ with a smaller neighborhood, we can assume that $M=M_b\times B$ and that $\pi$ is the projection to the second coordinate. Then apply Ehresmann's lemma to $N$, we have a local trivialization
    $$\Psi_N:N\to N_b\times B$$
such that $\pi\circ \Psi_N=\pi_{|N}$. We can regard $\Psi_N$ as a parametric family of smooth embeddings
$$f_y:N\hookrightarrow M_b$$
parameterized by $y\in B$, namely
$$(f_y(x),y)=\Psi_N^{-1}(x,y).$$

According to the isotopy extension theorem \cite{Cerf, Palais}, a parametric family of embeddings of $N_b$ in $M_b$ extends to a parametric family of self-diffeomorphism of the ambient manifold $M_b$. In other words, there is a smooth family $F_y$ of self-diffeomorphisms of $M_b$ such that $f_y=F_y|_{N_b}$ and $F_b$ is the identity map. So the trivialization $\Psi$ is the inverse of 
$$(x,y)\mapsto (F_y(x),y),$$
with $x\in M_b$ and $y\in B$.
\end{proof}

\begin{lemma}(Ehresmann Fibration for Manifolds with Boundary)\label{EhresmannWithBoundary}
    Let $M$ be a smooth manifold with boundary. Suppose $f:M\to B$ is a proper map onto a smooth base, and assume that both $f|_{M^{\circ}}:M^{\circ}\to B$ and $f|_{\partial M}:\partial M\to B$ are submersions, then $f$ is a locally trivial fiber bundle.
\end{lemma}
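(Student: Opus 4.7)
The plan is to reduce Lemma \ref{EhresmannWithBoundary} to the pair version of Ehresmann's theorem (Lemma \ref{appendix}) by extending $M$ across its boundary to a boundaryless manifold $\tilde M$, with $\partial M$ sitting inside as a closed submanifold. Concretely, I would fix a smooth collar embedding $c: \partial M \times [0, \varepsilon) \hookrightarrow M$ and glue on an outer collar to form
$$\tilde M = M \cup_{\partial M} \bigl(\partial M \times (-\varepsilon, 0]\bigr),$$
smoothed along $\partial M$ using $c$. This makes $\tilde M$ a smooth manifold without boundary containing $M$ as a codimension-zero piece and $\partial M$ as a closed codimension-one submanifold.

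Next I would extend $f$ to a smooth map $\tilde f : \tilde M \to B$ by defining it on the outer collar as the composition $\partial M \times (-\varepsilon, 0] \to \partial M \xrightarrow{f|_{\partial M}} B$. Since $f|_{\partial M}$ is a submersion, $\tilde f$ is a submersion on the outer collar; combined with the submersion assumption on $M^\circ$ and matching along $\partial M$, it is a submersion on all of $\tilde M$. After possibly shrinking $\varepsilon$ and the neighborhood of $b$, the restriction of $\tilde f$ over a neighborhood of $b$ remains proper, because properness of $f$ and compactness of $f^{-1}(b)$ force only a compact piece of the outer collar to meet $\tilde f^{-1}(U)$.

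I would then apply Lemma \ref{appendix} to the pair $\partial M \subset \tilde M$ with the map $\tilde f$, obtaining for each $b \in B$ a neighborhood $U$ and a fiber-preserving diffeomorphism $\tilde \Psi : \tilde f^{-1}(U) \xrightarrow{\cong} \tilde M_b \times U$ that carries $\partial M \cap \tilde f^{-1}(U)$ onto $(\partial M)_b \times U$. Since $\tilde \Psi$ preserves $\partial M$ and restricts to the identity on the fiber over $b$ (after the obvious identification), it preserves each of the two local sides of $\partial M$; by continuity, for $U$ sufficiently small it maps $M \cap \tilde f^{-1}(U)$ onto $M_b \times U$. Restricting $\tilde \Psi$ then yields the desired local trivialization of $f$.

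The main obstacle is purely technical: arranging that the extension $\tilde f$ is smooth across $\partial M$ and inherits properness, and ensuring that the trivialization of the enlarged family preserves the codimension-zero submanifold $M$. Smoothness is automatic from the construction once we use a smooth collar and the matching $\tilde f|_{\partial M} = f|_{\partial M}$; properness persists after a compactness-based shrinking of $U$ and $\varepsilon$. The passage from trivializing $\tilde f$ to trivializing $f$ reduces, as above, to a continuity argument near the boundary based on the fact that $\tilde \Psi$ restricts to the identity on the central fiber.
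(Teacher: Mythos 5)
Your overall strategy — enlarge $M$ to a boundaryless manifold containing $\partial M$ as an interior hypersurface, apply the relative Ehresmann Lemma \ref{appendix} to the pair, and restrict — is sound and is essentially the same reduction the paper makes (the paper glues two copies of $M$ along $\partial M$ rather than attaching an external collar). However, there is a genuine gap at the step you dismiss as automatic: the extension $\tilde f$ you define by collapsing the outer collar onto $\partial M$ and composing with $f|_{\partial M}$ is in general only continuous across $\partial M$, not smooth, and not even $C^1$. Matching values on $\partial M$ does not control the normal derivative: from inside $M$ the derivative of $f$ in the collar direction at a boundary point is whatever it is, while your outer extension has zero derivative in the collar direction. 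Concretely, take $M=[0,\infty)\times\mathbb R$, $B=\mathbb R$, $f(t,x)=x+t$, and the obvious collar $c\bigl((0,x),t\bigr)=(t,x)$; your extension is $\tilde f(t,x)=x+\max(t,0)$, which is not differentiable along $t=0$. Since Lemma \ref{appendix} is stated for smooth proper submersions, you cannot apply it to this $\tilde f$ as constructed. The fix is to choose the collar adapted to $f$: because $f|_{\partial M}$ is a submersion, at each $p\in\partial M$ one has $\ker df_p\not\subseteq T_p\partial M$, so there is an inward-pointing vector field along $\partial M$ lying in $\ker df$; flowing along it gives a collar whose curves lie in fibers of $f$, i.e. $f(c(x,t))=f(x)$. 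With that collar your constant-in-$t$ extension really is smooth (in the example above this is the collar $(0,x)\mapsto(t,x-t)$), and the rest of your argument (properness after shrinking, submersivity on the outer collar from $f|_{\partial M}$) goes through. Alternatively one can extend $f$ smoothly by patching local extensions with a partition of unity and invoke openness of the submersion condition near $\partial M$.

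Two smaller remarks. First, your claim that the trivialization $\tilde\Psi$ from Lemma \ref{appendix} sends $M\cap\tilde f^{-1}(U)$ to $M_b\times U$ uses that $\tilde\Psi$ is the identity on the fiber over $b$; this is true of the construction in the paper's proof of Lemma \ref{appendix} (there $F_b=\mathrm{Id}$) but is not part of its statement, so you should either cite that feature or argue by connectedness that $\tilde\Psi$ cannot swap the two sides of $\partial M$ over a connected $U$. Second, for comparison: the paper's doubling construction avoids choosing an extension of $f$ by hand, but it faces the same adapted-collar subtlety in order for the doubled map to be smooth on the double; your external-collar variant is equally workable once the collar is chosen inside $\ker df$ as above.
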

\begin{proof}
    We use the "double" construction. Let $M'$ be the gluing of two copies of $M$ along the boundary. Then there is a natural map $f':M'\to B$, which is proper submersion, and the restriction on $N=\partial M$ is also submersion. Then, by Lemma \ref{appendix}, $f'$ and $f'|_{N}$ can be simultaneously locally trivialized. Then the results follow by restricting to $M$.
\end{proof}

\bibliographystyle{plain}
\bibliography{bibfile}

\end{document}